\newtheorem{theorem}{Theorem}[section]
\newtheorem{lemma}[theorem]{Lemma}
\newtheorem{corollary}[theorem]{Corollary}
\theoremstyle{definition}
\newtheorem{definition}[theorem]{Definition}
\newtheorem{proposition}[theorem]{Proposition}
\theoremstyle{remark}
\numberwithin{equation}{section}
\newcommand{\Var}{\mathbf{Var}} 
\newcommand{\E}{\mathbf{E}}     
\newcommand{\Prob}{\mathbf{P}}  
\newcommand{\bN}{\mathbb{N}}    
\newcommand{\bR}{\mathbb{R}}    
\newcommand{\bZ}{\mathbb{Z}}    
\newcommand{\cA}{\mathcal{A}}   
\newcommand{\cC}{\mathcal{C}}   
\newcommand{\cO}{\mathcal{O}}   
\newcommand{\mK}{\mathsf{K}}    
\newcommand{\mL}{\mathsf{L}}    
\newcommand{\mF}{\mathsf{F}}    
\newcommand{\mG}{\mathsf{G}}    
\newcommand{\llbracket}{[\![}   
\newcommand{\rrbracket}{]\!]}   
\begin{document}

\title[Central limit theorem for descents and major indices]{A central limit theorem for descents and major indices in fixed conjugacy classes of $S_n$}

\author{Gene B. Kim}
\address{Department of Mathematics, University of Southern California, Los Angeles, CA 90089}
\email{genebkim@usc.edu}

\author{Sangchul Lee}
\address{Department of Mathematics, University of California, Los Angeles, Los Angeles, CA 90095}
\email{sos440@math.ucla.edu}

\subjclass[2010]{Primary 05A15, 60F05; Secondary 60C05, 05A05}

\date{November 11, 2018}

\keywords{descents, major indices, central limit theorem, permutation statistics, asymptotics, Curtiss, generating function}

\begin{abstract}
    The distribution of descents in fixed conjugacy classes of $S_n$ has been studied, and it is shown that its moments have interesting properties. Kim and Lee showed, by using Curtiss' theorem and moment generating functions, how to prove a central limit theorem for descents in arbitrary conjugacy classes of $S_n$. In this paper, we prove a modified version of Curtiss' theorem to shift the interval of convergence in a more convenient fashion and use this to show that the joint distribution of descents and major indices is asymptotically bivariate normal.
\end{abstract}

\maketitle


\section{Introduction}

The theory of descents in permutations has been studied thoroughly and is related to many questions. In \cite{Knuth}, Knuth connected descents with the theory of sorting and the theory of runs in permutations, and in \cite{Diaconis1}, Diaconis, McGrath, and Pitman studied a model of card shuffling in which descents play a central role. Bayer and Diaconis also used descents and rising sequences to give a simple expression for the chance of any arrangement after any number of shuffles and used this to give sharp bounds on the approach to randomness in \cite{Bayer}. Garsia and Gessel found a generating function for the joint distribution of descents, major index, and inversions in \cite{Garsia}, and Gessel and Reutenauer showed that the number of permutations with given cycle structure and descent set is equal to the scalar product of two special characters of the symmetric group in \cite{Gessel}. Diaconis and Graham also explained Peirce's dyslexic principle using descents in~\cite{DiaconisGraham}. Petersen also has an excellent and very thorough book on Eulerian numbers \cite{Petersen}.

\begin{definition}
    A permutation $\pi \in S_n$ has a \textit{descent} at position $i$ if $\pi(i) > \pi(i+1)$, where $i = 1, \dots, n-1$. The \textit{descent number} of $\pi$, denoted $d(\pi)$, is defined as the number of all descents of $\pi$ plus 1. The \textit{major index} of $\pi$, denoted $maj(\pi)$, is the sum of the positions at which $\pi$ has a descent.
\end{definition}

It is well known (\cite{Diaconis2}) that the distribution of $d(\pi)$ in $S_n$ is asymptotically normal with mean $\frac{n+1}{2}$ and variance $\frac{n+1}{12}$. Fulman also used Stein's method to show that the number of descents of a random permutation satisfies a central limit theorem with error rate $n^{-1/2}$ in \cite{Fulman2}. In \cite{Vatutin}, Vatutin proved a central limit theorem for $d(\pi) + d(\pi^{-1})$, where $\pi$ is a random permutation.

Fulman~\cite{Fulman1} proved that the distribution of descents in conjugacy classes with large cycles is asymptotically normal, and Kim~\cite{Kim} proved that descents in fixed point free involutions (matchings) is also asymptotically normal. After the latter result was proved, Diaconis~\cite{DiaconisFulman} conjectured that there are asymptotic normality results for descents in conjugacy classes that are fixed point free. Kim and Lee, in~\cite{KimLee}, proved a more general result that descents in arbitrary conjugacy classes are asymptotically normal, where the parameters depend only on the ratio of fixed points to $n$.

There have been central limit theorems proven about major indices as well. In~\cite{Chen}, Chen and Wang also use generating functions and Curtiss' theorem to prove asymptotic normality results about major indices of derangements. Billey et al, in~\cite{Billey}, consider the distribution of major indices on standard tableaux of arbitrary straight shape and certain skew shapes.

In this paper, we show that the joint distribution of descents and major indices in any conjugacy classes of $S_n$ is asymptotically bivariate normal. The precise formulation of the main statement is as follows.

\begin{theorem}
    \label{thm:main_01}
    For each conjugacy class $\cC_{\lambda}$ of $S_n$, write $\alpha_1$ for the density of fixed points of any permutations in $\cC_{\lambda}$ and define
    \begin{align*}
        W_{\lambda} = \left( \frac{d(\pi) - \frac{1-\alpha_1^2}{2}n}{n^{1/2}}, \frac{maj(\pi) - \frac{1-\alpha_1^2}{4}n^2}{n^{3/2}} \right),
    \end{align*}
    where $\pi$ is chosen uniformly at random from $\cC_{\lambda}$. Then, along any sequence of~$\cC_{\lambda}$'s such that $n \to \infty$ and $\alpha_1 \to \alpha \in [0, 1]$, $W_{\lambda}$ converges in distribution to a bivariabe normal distribution of zero mean and the covariance matrix $\Sigma_{\alpha}$ depending only on~$\alpha$.
\end{theorem}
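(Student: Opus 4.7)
My approach is the bivariate method of moment generating functions, following the Kim--Lee treatment of descents alone. For $\pi$ uniform on $\cC_{\lambda}$, set
$$M_\lambda(s,t) \;=\; \E\!\bigl[\,e^{s\, d(\pi)\,+\,t\,\text{maj}(\pi)}\,\bigr].$$
The three pieces are (i) an exact formula for $M_\lambda$ in terms of the cycle type, (ii) a Taylor analysis of $\log M_\lambda$ under the scaling of Theorem~\ref{thm:main_01}, and (iii) an appeal to the bivariate form of the \emph{modified Curtiss theorem} announced in the introduction---where the interval of MGF-convergence is allowed to shift with $n$ so as to absorb the growing-mean corrections---to upgrade pointwise convergence of the joint MGFs into convergence in distribution of $W_\lambda$.

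For step~(i), the natural tool is a Gessel--Reutenauer-type identity expressing the joint generating polynomial
$$F_\lambda(x,q) \;=\; \sum_{\pi \in \cC_\lambda} x^{d(\pi)}\, q^{\text{maj}(\pi)}$$
as a product structured by the cycle type of $\lambda$; isolating the fixed-point factor makes the dependence on $\alpha_1$ transparent, and then $M_\lambda(s,t) = F_\lambda(e^s,e^t)/|\cC_\lambda|$. For step~(ii), I expand $\log M_\lambda(a,b)$ around $(a,b)=(0,0)$. The linear terms must cancel the mean subtractions, which amounts to verifying $\E[d(\pi)]=\tfrac{1-\alpha_1^2}{2}n + o(n^{1/2})$ and $\E[\text{maj}(\pi)]=\tfrac{1-\alpha_1^2}{4}n^2 + o(n^{3/2})$. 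The quadratic terms, under $a=s/n^{1/2}$ and $b=t/n^{3/2}$, converge to $\tfrac{1}{2}(s,t)\,\Sigma_\alpha\,(s,t)^{\top}$, identifying the entries of $\Sigma_\alpha$ as the limits of $n^{-1}\Var(d)$, $n^{-2}\text{Cov}(d,\text{maj})$, and $n^{-3}\Var(\text{maj})$, each expected to come out as a quadratic polynomial in $\alpha$. Higher-order terms must vanish uniformly along the chosen sequence of conjugacy classes, which is exactly where the shifted interval in the modified Curtiss theorem is essential: the raw MGFs $M_\lambda(s,t)$ blow up at scales governed by the unnormalized means, so one must control the \emph{shifted} MGFs uniformly on a fixed neighborhood of the origin in~$\bR^2$ rather than argue in the original $(s,t)$ window.

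\emph{The hardest step.} I expect the main obstacle to be the asymptotic evaluation of the cross term $n^{-2}\text{Cov}(d(\pi),\text{maj}(\pi))$, which requires tracking not merely the number of descents but their \emph{positions} in a manner compatible with the cycle structure, uniformly along sequences with $\alpha_1 \to \alpha$. The contribution of short cycles (especially the fixed-point block) must be separated cleanly from the long-cycle contribution, since the two scale differently. Once this covariance is obtained and the uniform vanishing of cumulants of order~$\ge 3$ is established, assembling the bivariate normal limit and invoking the modified Curtiss theorem to conclude convergence in distribution of $W_\lambda$ should be routine.
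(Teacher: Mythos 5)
Your high-level philosophy (joint MGF plus a relaxed continuity theorem) matches the paper, but two of your load-bearing steps have genuine problems. First, your description of the modified Curtiss theorem is not the one the paper proves and, as you state it, is not an available tool: there is no version in which ``the interval of MGF-convergence is allowed to shift with $n$.'' The actual modification (Proposition \ref{thm:continuity}) is that pointwise convergence of the $M_{X_n}$ on a \emph{fixed} non-empty open set $U\subseteq\bR^d$, not necessarily containing the origin, already implies convergence in distribution; the proof goes through exponential tilting at a point $a\in U$, Curtiss' original theorem along lines, the Cram\'er--Wold device, and a Portmanteau-type lemma. This distinction is not cosmetic: it is precisely what lets the paper work only in the quadrant $s,r>0$ of arguments $(-s,-r)$, because the usable formula \eqref{eqn:genfn_master} for the joint generating function (the Fulman-type series over $a$) converges only when $t,q\in(0,1)$, i.e.\ when both exponential arguments are nonpositive. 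Since $d(\pi)$ and $maj(\pi)$ are bounded, the MGFs do not ``blow up''; the obstruction is that no convergent expression is available near the origin, so your plan to Taylor-expand $\log M_\lambda$ around $(0,0)$ and to control all higher-order terms ``uniformly on a fixed neighborhood of the origin'' starts exactly where the known formula gives you nothing.

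Second, even granting a formula, your step (ii) defers the entire analytic content. Verifying the means, the three second-order quantities, and the uniform vanishing of cumulants of order $\ge 3$ along sequences with $\alpha_1\to\alpha$ is the whole difficulty, and the cumulant route is not how one gets there from \eqref{eqn:genfn_master}. The paper instead proves a uniform quantitative estimate (Theorem \ref{thm:unifest}): $\lvert M_{W_\lambda}(-s,-r)-e^{\frac12\Sigma_{\alpha_1}(s,r)}\rvert\le C(r,s)\,n^{-1/6}$ for every conjugacy class, obtained by splitting the $a$-sum at $a\asymp \epsilon n^{3/2}/r$, approximating the large-$a$ part by a Beta-type integral, isolating the fixed-point contribution in the perturbation factor $\mG$, and running a Laplace-method analysis (Propositions \ref{prop:est_l_large} and \ref{prop:est_l_large_2}); the limit covariance $\Sigma_\alpha$ comes out of a Gaussian integral, not out of separately computed variances and covariances. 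Your instinct that the fixed-point block must be separated from the long cycles is right, and the uniformity in $\lambda$ you flag is indeed essential (it is what makes Theorem \ref{thm:main_02} and the passage $\alpha_1\to\alpha$ work), but as written the proposal neither supplies the convergent representation needed near the origin nor any mechanism for the higher-cumulant control it relies on, so the argument does not go through in its current form.
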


We will need two major ingredients for this; one is a modification of Curtiss' theorem relating pointwise convergence of moment generating function (m.g.f.) to the convergence in distribution of corresponding random variables, and the other is a uniform estimate on the m.g.f.\ of the joint distribution of descents and major index. The asymptotic joint normality will then follow as an immediate corollary. This uniform estimate will be strong enough to prove an analogous result for a more general class of subsets of of $S_n$, encompassing the asymptotical joint normality for derangements.

\begin{theorem}
    \label{thm:main_02}
    Suppose that $A_n$ is a subset of $S_n$ which is invariant under conjugation and that all $\pi \in A_n$ have the same number of fixed points. Denote by $\alpha_{1,n}$ the common density of fixed points of elements in $A_n$, and define
    \begin{align*}
        W_n = \left( \frac{d(\pi) - \frac{1-\alpha_{1,n}^2}{2}n}{n^{1/2}}, \frac{maj(\pi) - \frac{1-\alpha_{1,n}^2}{4}n^2}{n^{3/2}} \right),
    \end{align*}
    where $\pi$ is chosen uniformly at random from $A_n$. If $\alpha_{1,n} \to \alpha \in [0, 1]$ as $n\to\infty$, then $W_n$ converges in distribution to a bivariate normal distribution of zero mean and the covariance matrix $\Sigma_{\alpha}$.
\end{theorem}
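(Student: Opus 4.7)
The plan is to derive Theorem~\ref{thm:main_02} as a direct corollary of the two ingredients advertised just after Theorem~\ref{thm:main_01}: the uniform estimate on the m.g.f.\ of $(d(\pi), maj(\pi))$ across a conjugacy class $\cC_{\lambda}$, and the modified Curtiss theorem. The key observation is that, because $A_n$ is conjugation-invariant, it decomposes as a disjoint union $A_n = \bigsqcup_{\lambda} \cC_{\lambda}$ of conjugacy classes, and by hypothesis each $\cC_{\lambda}$ contributing to this decomposition has the common fixed-point density $\alpha_{1,n}$.

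Writing $M_{\lambda}(s,t) = \E\bigl[e^{sd(\pi)+t\,maj(\pi)}\bigr]$ for $\pi$ uniform in $\cC_{\lambda}$ and $M_{A_n}(s,t)$ for the analogous m.g.f.\ with $\pi$ uniform in $A_n$, one has the convex-combination identity
\begin{align*}
    M_{A_n}(s,t) = \frac{1}{|A_n|}\sum_{\lambda} |\cC_{\lambda}|\, M_{\lambda}(s,t),
\end{align*}
where the sum ranges over the conjugacy classes making up $A_n$. My first step is to feed each $\cC_{\lambda}$ into the uniform m.g.f.\ estimate, evaluated at $(s,t)$ rescaled by $(n^{-1/2}, n^{-3/2})$ and with the centering factors of $W_n$ absorbed: this estimate must be strong enough to give that the rescaled m.g.f.\ converges to the bivariate Gaussian m.g.f.\ associated to $\Sigma_{\alpha}$ \emph{uniformly in the choice of $\lambda$}, depending only on $\alpha_{1,n} \to \alpha$. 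Since a convex combination of functions all within $\varepsilon$ of a common limit is itself within $\varepsilon$ of that limit, the rescaled m.g.f.\ of $W_n$ inherits the same pointwise limit on whichever region (which need not contain the origin) the uniform estimate supplies.

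The second and final step is to invoke the modified Curtiss theorem to promote this m.g.f.\ convergence into convergence in distribution of $W_n$ to $\mathcal{N}(0, \Sigma_{\alpha})$; the modification is tailored precisely to absorb the shift caused by the $\frac{1-\alpha_{1,n}^2}{2}n$ and $\frac{1-\alpha_{1,n}^2}{4}n^2$ centerings, so the region of m.g.f.\ convergence need not be a neighborhood of the origin. The main obstacle is not in Theorem~\ref{thm:main_02} itself but in establishing the requisite \emph{uniformity} of the m.g.f.\ estimate across all $\lambda$ with the same $\alpha_{1,n}$; this is really a refinement to be proved along the way to Theorem~\ref{thm:main_01}, where the error bounds in terms of $\alpha_{1,n}$ and $n$ must be shown to be insensitive to the detailed cycle structure of $\lambda$. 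Once that uniform estimate is in hand, Theorem~\ref{thm:main_02} follows with only the elementary averaging argument above.
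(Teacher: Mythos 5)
Your proposal is correct and follows essentially the same route as the paper: decompose the conjugation-invariant set $A_n$ into conjugacy classes sharing the density $\alpha_{1,n}$, average the uniform m.g.f.\ estimate (the paper's Theorem~\ref{thm:unifest}, whose error bound $C(s,r)n^{-1/6}$ is indeed independent of the cycle structure of $\lambda$) over this convex combination, and then apply the modified Curtiss theorem (Proposition~\ref{thm:continuity}) to the pointwise limit on the open negative quadrant. The only slight misstatement is the motivation for the modification of Curtiss' theorem: it is needed because the available estimate of $M_{W_\lambda}$ holds only at arguments $(-s,-r)$ with $s,r>0$ (an open set not containing the origin), not because of the centering shifts, but this does not affect the validity of your argument.
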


This paper is organized as follows: In Section 2, we modify Curtiss' theorem in the form which is applicable to our proof. In Section 3, we establish a formula for the joint generating function of $(d(\pi), maj(\pi))$ for $\pi$ in the conjugacy class $\cC_{\lambda}$. In Section 4, we analyze this formula analytically to provide a uniform estimate on the m.g.f.\ $M_{W_{\lambda}}$ and then apply the modified Curtiss' theorem to conclude both main theorems.


\section{A modification of Curtiss' theorem}

For a random variable $ X $ taking values in $ \bR^{d} $, its moment generating function (m.g.f.) is defined as
\begin{align*}
	M_{X}(s) = \E\left[ e^{\langle s, X \rangle} \right], \qquad s \in \bR^d.
\end{align*}
In his paper \cite{Curtiss}, Curtiss showed a version of continuity theorem that, if $\{X_n\}_{n\in\bN}$ is a sequence of random variables in $\bR$ such that
\begin{itemize}
    \item[\textbf{(C)}] \label{item:curtiss_condition}
    $M_{X_{n}}(s)$ converges pointwise on a neighborhood of $s = 0$,
\end{itemize}
then $\{X_n\}_{n\in\bN}$ converge in distribution. This result has an advantage over L\'evy's continuity theorem in that the pointwise limit of $M_{X_n}$'s is guaranteed to be a m.g.f. Such a stronger conclusion is possible because \textbf{(C)} guarantees the tightness of $\{X_n\}_{n\in\bN}$.

In some applications, however, \textbf{(C)}~is quite costly to verify and requires extra inputs, while the stronger part of its conclusion - that the limit is always a m.g.f.\ of some distribution - is not essential. For instance, in~\cite{Kim} and \cite{KimLee}, the m.g.f.s of normalized descents in conjugacy classes are analyzed with their series expansions, which fail to converge for $s > 0$. In~\cite{Kim}, Kim circumvented this technical difficulty by establishing a bijection to show the convergence for $s>0$. In~\cite{KimLee}, Kim and Lee calculated an alternative form of the m.g.f.\ that is convergent for $s>0$ by expanding the original generating function in Laurent series at $\infty$ rather than at~$0$. Moreover, in both proofs, the limit is shown explicitly to be the m.g.f.\ of the normal distribution. If we were to take a similar approach, we would have to show that $M_{W_\lambda}$, the m.g.f.s of the normalized descent/major-index pairs $W_\lambda$, converge pointwise on an open set containing $(0,0)$. However, as the known series expansion of $M_{W_\lambda}$ is convergent only on $\left\{ (s,r) | s \leq 0, r \leq 0 \right\}$, we would need to use similar methods used in~\cite{KimLee} in order to come up with possibly several expressions for $M_{W_\lambda}$ that are convergent on different regions, whose union covers an open set containing $(0,0)$.


In this section, we provide a simple result that takes cares of this situation. We begin by introducing the following lemma, which is a not-so-famous entry of the big list of equivalent conditions for the convergence in distribution (known as Portmanteau theorem). A proof is also provided for self-containedness.

\begin{lemma}
	\label{lemma:conv_equivalence}
	Let $ X_{n} $ be random vectors in $ \bR^{d} $ for each $ n \in \bN \cup \{\infty\} $. Then the followings are equivalent:
	\begin{enumerate}
		\item $ X_{n} $ converges in distribution to $X_{\infty}$.
		\item $ X_{n} $ converges vaguely to $X_{\infty}$, i.e., $ \lim_{n\to\infty} \E[\varphi(X_{n})] = \E[\varphi(X_{\infty})] $ for all $ \varphi \in C_{c}(\bR^{d}) $, where $C_{c}(\bR^d) $ denotes the set of all continuous compactly supported functions $\varphi : \bR^d \to \bR$.
	\end{enumerate}
\end{lemma}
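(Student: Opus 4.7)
The plan is to prove the two directions separately. The implication $(1) \Rightarrow (2)$ is immediate, since every $\varphi \in C_{c}(\bR^{d})$ is bounded and continuous, so the definition of convergence in distribution applies directly. All of the work lies in $(2) \Rightarrow (1)$, and the subtle point there is that vague convergence alone does not force convergence in distribution — mass can escape to infinity, as when $X_{n} = n$ deterministically, in which case $\E[\varphi(X_{n})] \to 0$ for every $\varphi \in C_{c}(\bR)$. The hypothesis that the candidate limit $X_{\infty}$ is a bona fide random vector (so that its law has total mass one, not merely at most one) is exactly what rules this out, and my argument uses that fact in an essential way.

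The first step is to establish tightness of $\{X_{n}\}_{n\in\bN}$. Given $\epsilon > 0$, by inner regularity of the law of $X_{\infty}$ I would pick a compact set $K \subset \bR^{d}$ with $\Prob(X_{\infty} \in K) > 1 - \epsilon$, then choose $\varphi \in C_{c}(\bR^{d})$ with $0 \leq \varphi \leq 1$ and $\varphi \equiv 1$ on $K$. Then $\E[\varphi(X_{\infty})] \geq \Prob(X_{\infty} \in K) > 1 - \epsilon$, so hypothesis (2) gives $\E[\varphi(X_{n})] > 1 - 2\epsilon$ for all $n$ large enough. Since $\Prob(X_{n} \in \mathrm{supp}(\varphi)) \geq \E[\varphi(X_{n})]$ and $\mathrm{supp}(\varphi)$ is compact, this is tail-tightness of the sequence; enlarging the compact set to absorb the finitely many exceptional indices yields tightness of the full sequence.

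Granted tightness, the second step promotes (2) from $C_{c}(\bR^{d})$ to $C_{b}(\bR^{d})$, which is exactly the standard criterion for convergence in distribution. For any $\psi \in C_{b}(\bR^{d})$ and any large radius $L > 0$, pick a cutoff $\chi_{L} \in C_{c}(\bR^{d})$ with $0 \leq \chi_{L} \leq 1$ and $\chi_{L} \equiv 1$ on the closed ball $B_{L}$ of radius $L$, and decompose
\begin{align*}
    \E[\psi(X_{n})] - \E[\psi(X_{\infty})]
    &= \bigl( \E[(\psi\chi_{L})(X_{n})] - \E[(\psi\chi_{L})(X_{\infty})] \bigr) \\
    &\quad + \E\bigl[\psi(X_{n})(1 - \chi_{L}(X_{n}))\bigr] - \E\bigl[\psi(X_{\infty})(1 - \chi_{L}(X_{\infty}))\bigr].
\end{align*}
The first bracket tends to $0$ as $n \to \infty$ by (2), since $\psi\chi_{L} \in C_{c}(\bR^{d})$. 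The remaining two terms are bounded in absolute value by $\|\psi\|_{\infty} \Prob(X_{n} \notin B_{L})$ and $\|\psi\|_{\infty} \Prob(X_{\infty} \notin B_{L})$, both of which can be made arbitrarily small uniformly in $n$ by the tightness obtained above. Letting first $n \to \infty$ and then $L \to \infty$ gives $\E[\psi(X_{n})] \to \E[\psi(X_{\infty})]$, which is convergence in distribution.

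The only genuine obstacle is the tightness step, since it is precisely there that the assumption ``$X_{\infty}$ is a probability measure, not a subprobability measure'' is consumed; once tightness is in hand, the passage from $C_{c}$ to $C_{b}$ is a standard three-piece approximation.
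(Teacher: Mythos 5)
Your proof is correct and uses essentially the same idea as the paper's: split $\E[\psi(X_n)] - \E[\psi(X_\infty)]$ via a compactly supported cutoff, handle the cutoff piece with hypothesis (2), and control the tails using that the law of $X_\infty$ is a genuine probability measure. The paper folds the tightness step into a single limsup estimate followed by letting $\chi \uparrow 1$ via dominated convergence, whereas you extract tightness of $\{X_n\}$ as an explicit intermediate claim (via inner regularity of the law of $X_\infty$) before promoting $C_c$ to $C_b$; these are presentational variants of the same argument, and both correctly pinpoint that the hypothesis of a probability-measure limit is what forbids mass escaping to infinity.
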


\begin{proof}
    By Portmanteau theorem, the condition (1) is equivalent to
    \begin{enumerate}
        \item[(1')] $ \lim_{n\to\infty} \E[\varphi(X_{n})] = \E[\varphi(X_{\infty})] $ for all $\varphi$ in the set $C_b(\bR^d)$ of all continuous bounded functions on $\bR^d$.
    \end{enumerate}
    This obviously implies (2). For the other direction, assume that (2) is true and fix a function $ \chi \in C_{c}(\bR^{d}) $ taking values in $[0, 1]$. Then, for each $ \varphi \in C_{b}(\bR^{d}) $,
	\begin{align*}
		\left\lvert \E[\varphi(X_{n})] - \E[\varphi(X_{\infty})] \right\rvert
		&\leq \left\lvert \E[\varphi(X_{n})\chi(X_{n})] -\E[\varphi(X_{\infty})\chi(X_{\infty})] \right\rvert \\
		&\quad + \|\varphi\|_{\sup}(1 - \E[\chi(X_{n})])+ \|\varphi\|_{\sup}(1 - \E[\chi(X_{\infty})]),
	\end{align*}
	where $ \|\varphi\|_{\sup} = \sup\{|\varphi(x)| : x\in\bR^{d}\} $. Taking limsup as $ n\to\infty $, vague convergence gives us
	\begin{align*}
		\limsup_{n\to\infty} \left\lvert \E[\varphi(X_{n})] -\E[\varphi(X_{\infty})] \right\rvert
		\leq 2 \|\varphi\|_{\sup}(1 - \E[\chi(X_{\infty})]).
	\end{align*}
	By the dominated convergence theorem, this bound vanishes as $ \chi \uparrow 1 $ pointwise. Hence, we have $ \lim_{n\to\infty}\E[\varphi(X_{n})] = \E[\varphi(X_{\infty})] $, and (1) follows.
\end{proof}

\begin{proposition}
	\label{thm:continuity}
	Let $ X_{n} $ be random vectors in $ \bR^{d} $ for each $ n \in \bN \cup \{\infty\} $. Suppose that there is a non-empty open subset $ U \subseteq \bR^{d} $ such that $ \lim_{n\to\infty} M_{X_{n}}(s) = M_{X_{\infty}}(s) $ for all $ s \in U $. Then, $ X_{n} $ converges in distribution to $ X_{\infty} $.
\end{proposition}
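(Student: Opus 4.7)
The plan is to reduce to the original Curtiss theorem (which requires convergence of the m.g.f.\ on a neighborhood of the origin) via an exponential tilting trick, and then translate back to the original measures using Lemma~\ref{lemma:conv_equivalence}. The main obstacle is that the open set $U$ need not contain the origin, so the standard continuity argument for m.g.f.s cannot be applied directly to $X_n$.

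First, I would fix any $s_0 \in U$ and set $V = \{u - s_0 : u \in U\}$, which is an open neighborhood of the origin. Write $\mu_n$ for the distribution of $X_n$ (with $n \in \bN \cup \{\infty\}$); note that $M_{X_n}(s_0) > 0$ since the integrand $e^{\langle s_0, x \rangle}$ is strictly positive. Define the exponentially tilted probability measure $\nu_n$ on $\bR^d$ by
\[ \frac{d\nu_n}{d\mu_n}(x) = \frac{e^{\langle s_0, x \rangle}}{M_{X_n}(s_0)}. \]
A direct computation gives $M_{\nu_n}(t) = M_{X_n}(s_0 + t)/M_{X_n}(s_0)$ whenever $s_0 + t$ lies in the domain of $M_{X_n}$. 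In particular, for every $t \in V$ the hypothesis yields $M_{\nu_n}(t) \to M_{\nu_\infty}(t)$. Since $V$ is a neighborhood of the origin, the original Curtiss theorem applies and gives $\nu_n \to \nu_\infty$ in distribution.

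To pass from $\nu_n$ back to $\mu_n$, I invoke the vague-convergence criterion of Lemma~\ref{lemma:conv_equivalence}. For any $\varphi \in C_c(\bR^d)$, the function $\psi(x) = \varphi(x) e^{-\langle s_0, x \rangle}$ is continuous and compactly supported, hence bounded. Weak convergence of $\nu_n$ to $\nu_\infty$ (applied to $\psi$) together with the numerical convergence $M_{X_n}(s_0) \to M_{X_\infty}(s_0)$ then give
\[ \int \varphi \, d\mu_n = M_{X_n}(s_0) \int \psi \, d\nu_n \to M_{X_\infty}(s_0) \int \psi \, d\nu_\infty = \int \varphi \, d\mu_\infty. \]
Lemma~\ref{lemma:conv_equivalence} then delivers the desired convergence in distribution. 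I expect the tilting step to be the only substantive maneuver; once it shifts the region of m.g.f.\ convergence to a neighborhood of the origin, the remaining argument is just careful bookkeeping around the Radon--Nikodym derivative.
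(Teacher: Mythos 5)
Your proposal is correct and is essentially the paper's own argument: tilt exponentially at a fixed point of $U$, apply Curtiss' theorem to the tilted laws, and transfer back to the original laws via Lemma~\ref{lemma:conv_equivalence} using the boundedness of $\varphi(x)e^{-\langle s_0,x\rangle}$ and the convergence $M_{X_n}(s_0)\to M_{X_\infty}(s_0)$. The only caveat is that Curtiss' theorem as cited is one-dimensional, so your appeal to it for the $d$-dimensional tilted measures $\nu_n$ should be routed, as the paper does, through the scalar variables $\langle t, Y_n\rangle$ with $Y_n \sim \nu_n$ (whose m.g.f.\ at $s$ is $M_{\nu_n}(st)$, convergent for $|s|$ small because $V$ is a neighborhood of the origin) followed by the Cram\'er--Wold device.
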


\begin{proof}
	Fix $ a \in U $ and introduce random vectors $ Y_{n} $ in $ \bR^{d} $ whose laws are given by the exponential tilting
	\begin{align*}
		\Prob[Y_{n} \in \mathrm{d}x] = \frac{e^{\langle a, x \rangle}}{M_{X_{n}}(a)} \, \Prob[X_{n} \in \mathrm{d}x].
	\end{align*}
	For each $ t \in \bR^{d} $, there exists $ \delta > 0 $ such that $ a + st \in U $ for all $ s \in (-\delta, \delta) $. Then, for all $s \in (-\delta,\delta)$, we have
	\[
		\lim_{n\to\infty} M_{\langle t, Y_{n}\rangle}(s)
		= \lim_{n\to\infty} \frac{M_{X_{n}}(a+st)}{M_{X_{n}}(a)}
		= \frac{M_{X_{\infty}}(a+st)}{M_{X_{\infty}}(a)}
		= M_{\langle t, Y_{\infty}\rangle}(s),
	\]
	and so, $ \langle t, Y_{n} \rangle $ converges to $ \langle t, Y_{\infty} \rangle $ in distribution by Curtiss' continuity theorem. By Cramer-Wold device, this implies that $ Y_{n} $ converges in distribution to $ Y_{\infty} $. Then, for each $ \varphi \in C_{c}(\bR^{d}) $, the function $ \varphi(\cdot)e^{\langle a, \cdot \rangle} $ is bounded, and so,
	\[
		M_{X_{n}}(a) \E\left[ \varphi(Y_{n}) e^{\langle a, Y_{n} \rangle} \right]
		\xrightarrow[n\to\infty]{} M_{X_{\infty}}(a) \E\left[ \varphi(Y_{\infty}) e^{\langle a, Y_{\infty} \rangle} \right].
	\]
	In other words, $\E[\varphi(X_n)] \to \E[\varphi(X_\infty)]$, and so, by Lemma \ref{lemma:conv_equivalence}, $ X_{n} $ converges to $ X_{\infty}$ in distribution.
\end{proof}


\section{Generating function of $(d(\pi), maj(\pi))$}

For each finite set $A$, we write $\lvert A \rvert$ for the cardinality of $A$. For each integers $a \leq b$, the double-struck interval notation $\llbracket a, b \rrbracket = [a, b] \cap \bZ $ denotes the set of all integers between $a$ and $b$.

Throughout this article, $ \lambda $ will always denote an integer partition of a non-negative integer $ n $. For each permutation $\pi \in S_n$, we write $m_k(\pi)$ for the number of $k$-cycles in $\pi$. Then, $\cC_{\lambda} = \{ \pi \in S_n : m_k(\pi) = \lambda_k \text{ for all } k \in \llbracket 1, n \rrbracket \}$ denotes the conjugacy class of $ S_{n} $ with the cycle structure $\lambda$. Associated to each $ \lambda $ is the density $ \alpha_{1} = \alpha_1(\lambda) = \lambda_{1}/n $ of fixed points. We will see that $ \alpha_{1} $ is essentially the only parameter that determines the shape of the limiting distribution of the descent/major index pair.

We will set up some probability notations. For each integer partition $\lambda$, $\Prob_{\lambda}$ will denote the probabiliy law under which $\pi$ has the uniform distribution over $\cC_{\lambda}$ and $\sigma_i$ has the uniform distribution over $S_{\lambda_i}$ for each $i \in \llbracket 1, n \rrbracket $. We will always assume, without mentioning, that $\pi$ and $\sigma_i$'s have their respective, aforementioned distributions under $\Prob_{\lambda}$. Then, $\E_{\lambda}$ will denote the expectation corresponding to $\Prob_{\lambda}$.

It is convenient to consider some special functions. Let $\Gamma(s)$ denote the gamma function. By setting $x! = \Gamma(x+1)$, the factorial extends to all of $ \bR \setminus \{-1, -2, \cdots\}$. Then, we extend binomial coefficients accordingly. We also introduce the $q$-bracket notation $[a]_q = \frac{1-q^a}{1-q}$.

In \cite[Theorem 1]{Fulman1}, Fulman derived a formula for the generating function of descent numbers in conjugacy classes. This formula was a key ingredient in~\cite{Kim} and~\cite{KimLee} for establishing the central limit theorems. In this section, we would like to derive an analogous formula for the generating function of pairs of descent/major-index in conjugacy classes, which is the statement of the following proposition.

\begin{proposition}
	Let $ f_{i,a}(q) = \frac{1}{i} \sum_{d \mid i} \mu(d) [a]_{q^{d}}^{i/d} $. If $\sigma_i$ has the uniform distribution over $S_{\lambda_i}$ under $\Prob_{\lambda}$ for each $i \in \llbracket 1, n \rrbracket $, then,
	\begin{equation}
		\label{eqn:genfn_master}
		\frac{ \sum_{\pi \in \cC_{\lambda}} t^{d(\pi)}q^{maj(\pi)} }{(1-t)(1-qt)\cdots(1-q^{n}t)}
		= \sum_{a \geq 1} t^{a} \left( \prod_{i=1}^{n} \E_{\lambda} \left[ \prod_{k \geq 1} f_{i,a}(q^{k})^{m_{k}(\sigma_{i})} \right] \right).
	\end{equation}
\end{proposition}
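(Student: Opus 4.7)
The plan is to derive \eqref{eqn:genfn_master} by refining the classical MacMahon--Carlitz identity
\[
\frac{\sum_{\pi \in S_n} t^{d(\pi)} q^{maj(\pi)}}{(1-t)(1-qt)\cdots(1-q^n t)} = \sum_{a \geq 1} t^a [a]_q^n,
\]
which is the $\lambda$-summed version of \eqref{eqn:genfn_master}. This identity itself is a standard consequence of Stanley's $P$-partition theory: $[a]_q^n$ enumerates words $w \in \llbracket 1,a\rrbracket^n$ weighted by $q^{|w|-n}$, and standardizing each word decomposes the sum according to the resulting permutation $\pi$, its descent number $d(\pi)$, and its major index $maj(\pi)$. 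As a sanity check, I would first confirm that the two sides of \eqref{eqn:genfn_master} are consistent after summing over $\lambda \vdash n$.

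Next, I would reinterpret the right-hand side of \eqref{eqn:genfn_master} in necklace-theoretic terms. A direct calculation using $f_{i,a}(q) = \frac{1}{i}\sum_{d\mid i}\mu(d)[a]_{q^d}^{i/d}$ identifies $f_{i,a}(q)$ as the generating function, weighted by $q^{\sum\text{labels}}$, for primitive necklaces of length $i$ on $\llbracket 0, a-1\rrbracket$. Combined with the cycle-index formula $\E_\lambda[\prod_k x_k^{m_k(\sigma_i)}] = [u^{\lambda_i}]\exp(\sum_k x_k u^k/k)$, a short computation summing a geometric series gives
\[
\exp\!\Big(\sum_{k \geq 1}\frac{f_{i,a}(q^k)}{k}\,u^k\Big) = \prod_{N\text{ primitive of length }i}\frac{1}{1 - u\, q^{|N|}},
\]
so that $\prod_i \E_\lambda[\prod_k f_{i,a}(q^k)^{m_k(\sigma_i)}]$ enumerates multisets of primitive necklaces containing exactly $\lambda_i$ necklaces of length $i$, weighted by $\prod_N q^{|N|}$. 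Summing over $\lambda \vdash n$, substituting $u = v^i$ in the $i$-th factor, and applying the Chen--Fox--Lyndon factorization $\prod_{N\text{ primitive}}(1-v^{\text{len}(N)}q^{|N|})^{-1} = (1-v[a]_q)^{-1}$ then recovers $[v^n](1-v[a]_q)^{-1} = [a]_q^n$, matching MacMahon--Carlitz.

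The per-$\cC_\lambda$ identity is finally obtained via the Gessel--Reutenauer bijection between words $w \in \llbracket 1,a\rrbracket^n$ and pairs (primitive necklace multiset, $P$-partition filling): this bijection carries the cycle type of the standardization of $w$ onto the multi-length structure of the necklace multiset, and carries the weight $q^{maj(\pi)}\binom{a-d(\pi)+n}{n}_q$ onto $\prod_N q^{|N|}$. Restricting to $\pi \in \cC_\lambda$ restricts the multisets to those with exactly $\lambda_i$ length-$i$ necklaces, producing precisely the right-hand side of \eqref{eqn:genfn_master}. The main obstacle will be setting up the Gessel--Reutenauer bijection carefully enough that the $q$-weight $q^{maj(\pi)}\binom{a-d(\pi)+n}{n}_q$ and the cycle-type statistic are simultaneously preserved; once that bookkeeping is in place, the identity reduces to the necklace interpretation of the right-hand side established above.
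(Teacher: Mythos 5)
Your proposal takes a genuinely different route from the paper. The paper starts from Fulman's Theorem~1 — the generating function \eqref{eqn:genfn_fulman} with the cycle-type markers $x_i$ already built in — and then performs a purely formal power-series manipulation: expand $\prod_{i,m}(1-q^m x_i y^i)^{-J_{i,m,a}}$ via $-\log(1-x)$, recognize $\sum_m J_{i,m,a}q^m = f_{i,a}(q)$ (Lemma~\ref{lemma:fiaq_formula}), and collect coefficients of $y^n\prod_i x_i^{\lambda_i}$ using the cycle-index identity (Lemma~\ref{lemma:genfn_num_cycles}). You instead try to re-derive the identity bijectively, essentially unpacking the Gessel--Reutenauer combinatorics that underlies Fulman's formula rather than taking it as a black box. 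Your first two steps are sound: the identification of $f_{i,a}(q)$ with the $q$-weighted primitive necklace count, the computation $\exp(\sum_k f_{i,a}(q^k)u^k/k)=\prod_{\operatorname{len}(N)=i}(1-uq^{|N|})^{-1}$, and the Chen--Fox--Lyndon reduction to $(1-v[a]_q)^{-1}$ giving the $\lambda$-summed check are all correct.

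The gap is in your final paragraph. The Gessel--Reutenauer bijection does \emph{not} carry the cycle type of $\mathrm{std}(w)$ to the multi-length structure of the necklace multiset. A two-letter example already kills this: for $w=12$ one has $\mathrm{std}(w)=12$ with cycle type $(1,1)$, but the Chen--Fox--Lyndon factorization of $12$ is a single Lyndon word of length $2$, so the necklace multiset has length-type $(2)$, not $(1,1)$. What Gessel--Reutenauer actually produce is a \emph{different} permutation $\sigma(M)\in S_n$ built directly from the necklace multiset $M$ (by linearly ordering the letter occurrences); its cycle type is the length-type of $M$ by construction, but $\sigma(M)$ is not the standardization of any word naturally attached to $M$, and the relation between its descent set and the circular data of $M$ requires its own bookkeeping. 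Likewise, the identification of the $P$-partition weight $q^{maj(\pi)}\binom{a-d(\pi)+n}{n}_q$ with $\prod_N q^{|N|}$ doesn't hold at the level of a single $\pi$ and a single $M$: one side is a polynomial in $q$, the other a monomial, and the equality only materializes after the correct aggregation over fibers of the bijection. Until you replace ``cycle type of $\mathrm{std}(w)$'' with the genuine Gessel--Reutenauer permutation and redo the descent-set and weight bookkeeping accordingly, the last step does not go through. The paper's route — treating Fulman's \eqref{eqn:genfn_fulman} as given and extracting the $\cC_\lambda$-coefficient formally — avoids exactly this bijective subtlety.
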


\begin{proof}
	Recall that for non-negative integers $r_{1}, \cdots, r_{n}$ summing to $n$, the quantity
	\begin{align*}
		M(r_1,\cdots,r_a)
		= \frac{1}{n} \sum_{d\mid r_{1},\cdots,r_{a}} \mu(d) \frac{(n/d)!}{(r_{1}/d)!\cdots(r_{a}/d)!}
	\end{align*}
	counts the number of primitive circular words of length $ n $ from the alphabet $ \llbracket 1, a \rrbracket $ in which the letter $ i $ appears $ r_{i} $ times. Associated to this quantity, we define $ J_{i,m,a} $ by
	\begin{align*}
		J_{i,m,a} = \sum_{\substack{ r_{1}+\cdots+r_{a} = i \\ 1r_{1} + \cdots + ar_{a} = i+m }} M(r_{1},\cdots,r_{a}).
	\end{align*}
	Then, one of the main results in \cite{Fulman1} is the generating function
	\begin{align}
		\label{eqn:genfn_fulman}
		\sum_{n\geq 0} \frac{y^{n}\sum_{\pi \in S_{n}} t^{d(\pi)}q^{maj(\pi)} \prod_{i} x_{i}^{m_{i}(\pi)}}{(1-t)(1-qt)\cdots(1-q^{n}t)}
		= \sum_{a\geq 1} t^{a} \prod_{\substack{i \geq 1 \\ m \geq 0}} \left( \frac{1}{1 - q^{m}x_{i}y^{i}} \right)^{J_{i,m,a}}.
	\end{align}
	Our strategy is to expand the huge product on the right-hand side. In the course of computations, the following lemmas will be useful.
	
	\begin{lemma}
		\label{lemma:fiaq_formula}
		For $q \in [0, 1]$, we have $ f_{i,a}(q) = \sum_{m \geq 0} J_{i,m,a}q^{m} $.
	\end{lemma}

	\begin{proof}[Proof of Lemma \ref{lemma:fiaq_formula}]
		Plugging in the definition of $ J_{i,m,a} $ and rearranging, we see that
		\begin{align*}
			\sum_{m \geq 0} J_{i,m,a}q^{m}
			&= \frac{1}{i} \sum_{d \mid i} \mu(d) \sum_{m \geq 0} q^{m} \sum_{\substack{ r_{1}+\cdots+r_{a} = i \\ 1r_{1} + \cdots + ar_{a} = i+m \\ d\mid r_{1},\cdots,r_{a} }} \frac{(i/d)!}{(r_{1}/d)!\cdots(r_{a}/d)!} \\
			&= \frac{1}{i} \sum_{d \mid i} \mu(d) \sum_{\substack{ r_{1}+\cdots+r_{a} = i \\ d\mid r_{1},\cdots,r_{a} }} \frac{(i/d)!}{(r_{1}/d)!\cdots(r_{a}/d)!} q^{\sum_k (k-1)r_k}.
		\end{align*}
		Noting that $ \frac{i!}{r_{1}!\cdots r_{a}!} $ with $ r_{1} + \cdots + r_{a} = i $ is the number of tuples $ (x_{1}, \cdots, x_{i}) \in \llbracket 1, a \rrbracket^{i} $ in which $ k $ appears $ r_{k} $ times, we can rewrite the above formula as
		\begin{align*}
			\sum_{m \geq 0} J_{i,m,a}q^{m}
			&= \frac{1}{i} \sum_{d \mid i} \mu(d) \sum_{1 \leq x_{1},\cdots,x_{i/d} \leq a} q^{d\sum_{k=1}^{i/d} (x_{k} - 1)},
		\end{align*}
		which yields the definition of $ f_{i,a} (q)$ from the formula $\sum_{x=1}^{a} q^{d(x-1)} = [a]_{q^d}$.
	\end{proof}

	\begin{lemma}
		\label{lemma:genfn_num_cycles}
		Suppose that $\sigma$ is uniformly distributed over $S_n$. Then,
		\begin{align}
		    \label{eqn:genfn_num_cycles}
		    \E \left[\prod_{k} x_{k}^{m_{k}(\sigma)}\right] = \sum_{r \vdash n} \left[ \prod_{k} \frac{1}{r_{k}!} \left( \frac{x_{k}}{k} \right)^{r_{k}} \right].
		\end{align}
	\end{lemma}
	
	\begin{proof}[Proof of Lemma \ref{lemma:genfn_num_cycles}]
		For each $ r \vdash n $, the number of permutations in $ S_{n} $, for which the number of $ k $-cycles is exactly $ r_{k} $, is given by $ n!/\prod_{k} r_{k}!k^{r_{k}} $. Plugging this into
		\begin{align*}
		    \E\left[\prod_{k} x_{k}^{m_{k}(\sigma)}\right] = \sum_{r \vdash n} \left[ \prod_k x_k^{r_k} \right] \Prob\left[ m_k(\sigma) = r_k \text{ for all } k = 1, \cdots, n \right]
		\end{align*}
		proves the desired identity.
	\end{proof}
	
	Returning to the proof of Proposition \ref{eqn:genfn_master}, we find that the coefficient of the right-hand side of \eqref{eqn:genfn_fulman} can be rearranged, by using the series expansion $-\log(1-x) = \sum_{k=1}^{\infty} \frac{1}{k}x^k$ and Lemma \ref{lemma:fiaq_formula}, as
	\begin{align}
		\prod_{\substack{i \geq 1 \\ m \geq 0}} \left( \frac{1}{1 - q^{m}x_{i}y^{i}} \right)^{J_{i,m,a}}
		= \exp \left\{ \sum_{i \geq 1}\sum_{k \geq 1} \frac{1}{k}f_{i,a}(q^{k}) x_{i}^{k} y^{ik} \right\}.
		\tag{$\diamond$}
	\end{align}
	For each given $ i $ and $ k $, we apply the Taylor series $ e^{x} = \sum_{s = 0}^{\infty} \frac{x^{s}}{s!} $ to expand the factor $ \exp \{ \frac{1}{k}f_{i,a}(q^{k}) x_{i}^{k} y^{ik} \} $. Since the generic variable $ s $ needs to be distinguished for different choices of $ i $ and $ k $, we explicate this dependence by writing the generic variables as $ s_{i,k} $. The resulting expansion takes the form
	\begin{align*}
		(\diamond) = \sum_{(s_{i,k})_{i,k\in\bN}} \prod_{i,k \geq 1}\frac{1}{s_{i,k}!} \left( \frac{f_{i,a}(q^{k}) x_{i}^{k} y^{ik}}{k} \right)^{s_{i,k}}.
	\end{align*}
	Now, for each $\lambda \vdash n$, we collect all terms satisfying $\sum_{k} k s_{i,k} = \lambda_i$ for each $i \in \llbracket 1, n \rrbracket $. Then, by Lemma \ref{lemma:genfn_num_cycles}, $(\diamond)$ simplifies to
	\begin{align*}
		(\diamond)
		&= \sum_{n\geq 0} \sum_{\lambda \vdash n} \sum_{\substack{(s_{i,k})_{i,k\in\bN} \\ \sum_{k} k s_{i,k} = \lambda_i}} \left[ \prod_{i,k \geq 1}\frac{1}{s_{i,k}!} \left( \frac{f_{i,a}(q^{k})}{k} \right)^{s_{i,k}} \right] y^n \prod_{i\geq 1} x_i^{\lambda_i} \\
		&\stackrel{\text{\eqref{eqn:genfn_num_cycles}}}= \sum_{n \geq 0} \sum_{\lambda \vdash n} \left( \prod_{i=1}^{n} \E_{\lambda} \left[ \prod_{k \geq 1} f_{i,a}(q^{k})^{m_{k}(\sigma_{i})} \right] \right)  y^{n} \prod_{i \geq 1} x_{i}^{\lambda_{i}} .
	\end{align*}
	Plugging this back into the generating function proves \eqref{eqn:genfn_master} as required.
\end{proof}

As a sanity check, recall that $ \E \left[ x^{\sum_{i} m_{i}(\sigma)} \right] = \binom{n + x - 1}{n} $ holds if $\sigma$ is uniformly distributed over $S_n$. Taking the limit as $ q \uparrow 1 $ to the key identity \eqref{eqn:genfn_master}, we obtain
\begin{align*}
	\frac{ \sum_{\pi \in \cC_{\lambda}} t^{d(\pi)} }{(1-t)^{n+1}}
	= \sum_{a \geq 1} t^{a} \prod_{i \geq 1} \binom{\lambda_{i} + f_{i,a}(1) - 1}{\lambda_{i}},
\end{align*}
which is exactly the conclusion of \cite[Corollary 3]{Fulman1}.

In the authors' paper \cite{KimLee}, it was useful to obtain an estimation for the quantity $ f_{i,a}(1) $. Likewise, an analogous estimation for $ f_{i,a}(q) $ will be useful in dealing with \eqref{eqn:genfn_master}. The following lemma serves this purpose.

\begin{lemma}
    \label{lemma:fiaq_est}
	We have $ [a]_{q}^{i} - \frac{i}{2} [a]_{q}^{i/2} \leq i f_{i,a}(q) \leq [a]_{q}^{i} $ for any $ q \in [0, 1] $.
\end{lemma}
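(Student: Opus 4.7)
The plan is to give both inequalities a combinatorial meaning by identifying $i f_{i,a}(q)$ with the generating function for \emph{primitive} linear sequences of length $i$ over the alphabet $\llbracket 1, a \rrbracket$, weighted by $q^{\sum_{k}(x_k - 1)}$. Indeed, the computation already carried out inside the proof of Lemma \ref{lemma:fiaq_formula} yields
\[
    i f_{i,a}(q) \;=\; \sum_{d \mid i} \mu(d)\, [a]_{q^d}^{i/d} \;=\; \sum_{d \mid i} \mu(d) \sum_{y \in \llbracket 1, a \rrbracket^{i/d}} q^{\,d\sum_{k=1}^{i/d}(y_k - 1)},
\]
and every length-$i$ sequence $x$ factors uniquely as $x = y^{e}$ for a unique divisor $e \mid i$ and a unique primitive $y$ of length $i/e$, with $\sum_k(x_k - 1) = e \sum_k(y_k - 1)$. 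Möbius inversion across this factorization collapses $i f_{i,a}(q)$ to the $e = 1$ contribution, i.e., the generating function of primitive sequences.

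Granted this interpretation, the upper bound is immediate: for $q \in [0,1]$ every summand is nonnegative, and primitive sequences form a subset of all sequences, so $i f_{i,a}(q) \leq [a]_q^i$. For the lower bound I would identify the difference $[a]_q^i - i f_{i,a}(q)$ with the contribution of the \emph{non}-primitive sequences, grouped by the factorization $x = y^e$ with $e \geq 2$, and then drop the primitivity constraint on the base $y$:
\[
    [a]_q^i - i f_{i,a}(q) \;=\; \sum_{\substack{e \mid i \\ e \geq 2}} \sum_{\substack{y \in \llbracket 1, a \rrbracket^{i/e} \\ y \text{ primitive}}} q^{\,e \sum_k(y_k - 1)} \;\leq\; \sum_{\substack{e \mid i \\ e \geq 2}} [a]_{q^e}^{i/e}.
\]

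To close, I invoke two elementary monotonicity facts on $[0,1]$: the polynomial $[a]_t$ is increasing in $t$ and bounded below by $1$, so for each $e \geq 2$ one has $[a]_{q^e} \leq [a]_q$ and $[a]_q^{i/e} \leq [a]_q^{i/2}$, making each summand above at most $[a]_q^{i/2}$. The map $e \mapsto i/e$ bijects the divisors of $i$ with $e \geq 2$ onto the divisors of $i$ lying in $\{1, 2, \ldots, \lfloor i/2 \rfloor\}$, so their count is at most $i/2$. Combining these two ingredients yields $[a]_q^i - i f_{i,a}(q) \leq \tfrac{i}{2}[a]_q^{i/2}$, which is the desired lower bound. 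The main obstacle is really the bookkeeping that converts the signed Möbius sum into a clean primitive-sequence count; once that identification is in hand, the rest is monotonicity of $[a]_t$ and a crude divisor-counting step.
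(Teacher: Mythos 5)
Your proof is correct and follows essentially the same route as the paper: the upper bound is the same ``primitive words form a subset of all words'' estimate (the paper phrases it with circular words, you with linear ones), and your lower bound reduces to the identical intermediate inequality $[a]_q^i - i f_{i,a}(q) \le \sum_{e \mid i,\, e \ge 2} [a]_{q^e}^{i/e}$, finished by the same monotonicity of $[a]_{q^k}$ in $k$ and the same divisor-count bound $i/2$. The only cosmetic difference is that you obtain that intermediate sum from the exact factorization of each word as a power of a primitive word, whereas the paper simply drops the Möbius signs in the defining sum for $f_{i,a}$.
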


\begin{proof}
	First, we note that $ M(r_{1}, \cdots, r_{a}) $ is bounded from above by the number of circular words of length $ i = r_{1} + \cdots + r_{a} $ from the alphabet $ \llbracket 1, a \rrbracket $ in which the letter~$ k $ appears $ r_{k} $ times, which is exactly $ \frac{1}{i} \frac{i!}{r_{1}!\cdots r_{a}!} $. This gives the bound
	\begin{align*}
		f_{i,a}(q)
		&\leq \frac{1}{i} \sum_{m \geq 0} q^{m} \sum_{\substack{ r_{1}+\cdots+r_{a} = i \\ 1r_{1} + \cdots + ar_{a} = i+m }} \frac{i!}{r_{1}!\cdots r_{a}!}.
	\end{align*}
	Following the idea of the proof of Lemma \ref{lemma:fiaq_formula}, we find that this bound equals the desired upper bound. For the lower bound, we use the following crude estimate
	\begin{align*}
		i f_{i,a}(q)
		\geq [a]_{q}^{i} - \sum_{\substack{d \mid i \\ d \neq i}} [a]_{q^{i/d}}^{d}
		\geq [a]_{q}^{i} - \sum_{\substack{d \mid i \\ d \neq i}} [a]_{q}^{i/2}
		\geq [a]_{q}^{i} - \frac{i}{2} [a]_{q}^{i/2}.
	\end{align*}
	Here, the second inequality follows from the fact that $ [a]_{q^{k}} $ is decreasing in $ k $.
\end{proof}


\section{Main result}

Let $\pi$ be chosen uniformly at random from $\cC_{\lambda}$. In order to establish the asymptotic normality of $(d(\pi), maj(\pi))$, we consider the following normalization
\begin{align*}
    W_{\lambda} = \left( \frac{d(\pi) - \frac{1-\alpha_1^2}{2}n}{n^{1/2}}, \frac{maj(\pi) - \frac{1-\alpha_1^2}{4}n^2}{n^{3/2}} \right).
\end{align*}
We aim to prove that $W_{\lambda}$ is asymptotically normal with mean zero and covariance matrix $\Sigma_{\alpha_1}$, where $\Sigma_{\alpha}$ is defined by the following $2$ by $2$ matrix
\begin{align*}
    \Sigma_{\alpha}
    = \begin{pmatrix}
		\frac{1}{12}(1-4\alpha^{3}+3\alpha^{4}) & \frac{1}{24}(1-4\alpha^{3}+3\alpha^{4}) \\
		\frac{1}{24}(1-4\alpha^{3}+3\alpha^{4}) & \frac{1}{36}(1-\alpha^{3})
	\end{pmatrix}.
\end{align*}
Since any real symmetric matrix determines a quadratic form and vice versa, we will abuse the notation to write $\Sigma_{\alpha}(x) = x^{\mathsf{T}} \Sigma_{\alpha} x$ for any $x \in \bR^2$. The goal of this section is to establish the proof of the following uniform estimate.

\begin{theorem}
    \label{thm:unifest}
    For each $s > 0$ and $r > 0$, there exists a constant $C = C(r, s) > 0$, depending only on $s$ and $r$, such that
    \begin{align}
        \label{eqn:unifest}
        \left\lvert M_{W_{\lambda}}(-s, -r) - e^{\frac{1}{2}\Sigma_{\alpha_1}(s, r)} \right\rvert \leq C(r, s) n^{-1/6}
    \end{align}
    holds for any $n \geq 1$ and for any conjugacy class $\cC_{\lambda}$ of $S_n$.
\end{theorem}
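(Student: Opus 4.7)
The plan is to evaluate the generating-function identity \eqref{eqn:genfn_master} at $t = t_n := e^{-s/\sqrt{n}}$ and $q = q_n := e^{-r/n^{3/2}}$ and to extract the asymptotic behavior of the resulting expression. For $s,r>0$, both $t_n$ and $q_n$ lie in $(0,1)$, and the series on the RHS of \eqref{eqn:genfn_master} converges absolutely. Since $\E_{\lambda}[t^{d(\pi)}q^{maj(\pi)}] = |\cC_{\lambda}|^{-1}\sum_{\pi \in \cC_{\lambda}} t^{d(\pi)}q^{maj(\pi)}$, rearranging gives
\begin{align*}
M_{W_{\lambda}}(-s,-r) = \frac{e^{s\mu_1/\sqrt{n} + r\mu_2/n^{3/2}}}{|\cC_{\lambda}|} \prod_{j=0}^{n}(1-q_n^j t_n) \sum_{a \geq 1} t_n^a G_a(q_n),
\end{align*}
where $\mu_1 = \tfrac{1}{2}(1-\alpha_1^2)n$, $\mu_2 = \tfrac{1}{4}(1-\alpha_1^2)n^2$, and $G_a(q) := \prod_{i=1}^n \E_{\lambda}\bigl[\prod_{k \geq 1} f_{i,a}(q^k)^{m_k(\sigma_i)}\bigr]$. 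The goal reduces to showing that this expression equals $e^{\frac{1}{2}\Sigma_{\alpha_1}(s,r)}$ with error $O(n^{-1/6})$, uniformly in $\lambda$.

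The first step is to simplify $G_a(q_n)$. Lemma~\ref{lemma:fiaq_est} allows us to replace $if_{i,a}(q_n^k)$ by $[a]_{q_n^k}^i$ with a relative error of order $\tfrac{i}{2}[a]_{q_n^k}^{-i/2}$, which is negligible for $i \geq 2$ in the dominant range of $a$. After this replacement, Lemma~\ref{lemma:genfn_num_cycles} combined with the standard EGF identity $\sum_n y^n \E_{S_n}[\prod_k x_k^{m_k(\sigma)}] = \exp\bigl(\sum_k x_k y^k/k\bigr)$ reduces each expectation to an explicit coefficient extraction. Via the telescoping identity $\sum_k [a]_{q^k}y^k/k = -\sum_{j=0}^{a-1}\log(1-yq^j)$, the dominant $i=1$ factor collapses to the $q$-binomial coefficient $\binom{\lambda_1+a-1}{\lambda_1}_{q_n}$, while the $i \geq 2$ factors depend on $\lambda$ only through the individual multiplicities $\lambda_i$ and contribute lower-order corrections.

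The second step is a Laplace-type analysis of $\sum_a t_n^a G_a(q_n)$. After the above simplification, the logarithm of the summand, viewed as a function of $a$, is approximately strictly concave with a unique maximum at $a_*$ satisfying $a_*/n \to \tfrac{1}{2}(1-\alpha_1^2)$. Substituting $a = a_* + u\sqrt{n}$ and Taylor-expanding the exponent to quadratic order in $u$ isolates a Gaussian integral whose covariance, combined with the prefactor $e^{s\mu_1/\sqrt{n}+r\mu_2/n^{3/2}}\prod_j(1-q_n^j t_n)/|\cC_{\lambda}|$, yields exactly $e^{\frac{1}{2}\Sigma_{\alpha_1}(s,r)}$. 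The $n^{-1/6}$ rate in \eqref{eqn:unifest} emerges from balancing the cubic Taylor-truncation error of order $u^3/\sqrt{n}$ against the Gaussian tail outside a window of width $n^{1/6}$ in $u$.

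The main obstacle is the uniformity of the estimate in $\lambda$. While the limiting covariance $\Sigma_{\alpha_1}$ depends only on the density of fixed points, the summand $G_a(q_n)$ depends on the entire cycle structure of $\lambda$. One must therefore show that the contributions of the $i \geq 2$ factors average out to leading order and contribute only $O(n^{-1/6})$ corrections, with constants independent of $\lambda$. This requires uniform bookkeeping of the error terms from Lemma~\ref{lemma:fiaq_est}, the replacement of $q$-binomial coefficients by their large-$a$ asymptotics, and the quadratic approximation in the Laplace expansion itself.
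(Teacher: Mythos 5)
Your skeleton (evaluate \eqref{eqn:genfn_master} at $t=e^{-s/\sqrt n}$, $q=e^{-r/n^{3/2}}$, simplify the $\mK$-type factors via Lemmas \ref{lemma:fiaq_est} and \ref{lemma:genfn_num_cycles}, then do a Laplace analysis of the $a$-sum) is the same as the paper's, but the quantitative claims at the heart of your Laplace step are wrong. The sum over $a$ does not localize at $a_*$ with $a_*/n\to\frac{1}{2}(1-\alpha_1^2)$: writing $\delta=r/n^{3/2}$, the summand behaves (up to the fixed-point factor) like $t^a\bigl(\frac{1-q^a}{\delta}\bigr)^n$, so the competition between $t^a=e^{-sa/\sqrt n}$ and $(1-q^a)^n$ places the saddle where $q^a=s/(s+r)$, i.e.\ $a\asymp n^{3/2}$, independently of $\alpha_1$ to leading order, and the fluctuation window in $a$ has width of order $n$, not $\sqrt n$. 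You have conflated the expansion index $a$ of the Carlitz--Fulman identity with the descent statistic itself. The paper's route makes this explicit: after the substitution $u=q^x$ the large-$a$ part becomes the Beta-type integral $\int_0^{e^{-\epsilon}}u^{\frac{sn}{r}-1}(1-u)^n\,\mG(u)\,\mathrm{d}u$ peaked at $u=s/(s+r)$ (Propositions \ref{prop:est_common_factor} and \ref{prop:est_l_large}), and the small-$a$ range must first be split off and shown to be exponentially negligible (Proposition \ref{prop:est_l_small}) --- a step your passing reference to ``the dominant range of $a$'' does not carry out.

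The more serious omission is that the entire $\alpha_1$-dependence of $\Sigma_{\alpha_1}$, and the $n^{-1/6}$ rate, come from the $i=1$ (fixed-point) factor, which you correctly identify as the $q$-binomial coefficient $\binom{\lambda_1+a-1}{\lambda_1}_{q}$ but then never analyze; you even assert the $i\geq 2$ factors are ``lower-order corrections,'' whereas they supply the bulk $\bigl(\frac{1-q^a}{\delta}\bigr)^{n-\lambda_1}$ of the exponent and only their \emph{errors} are lower order. What is needed is an asymptotic for the fixed-point factor as a function of $u=q^a$ near $s/(s+r)$ that is uniform in $\lambda_1\in\{0,\dots,n\}$; this is the content of the paper's Lemma \ref{lemma:est_G} (a Poissonization of the cycle-type expansion, a Chebyshev concentration step, and a case split at $\lambda_1=\delta^{-1/4}$), and it is precisely this step that produces the linear tilt proportional to $\alpha_1^2$ in the Gaussian integral of Proposition \ref{prop:est_l_large_2} --- hence the $\alpha_1$-dependent covariance --- and the $n^{-1/6}$ error. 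Your attribution of the $n^{-1/6}$ to a cubic truncation error in the Laplace expansion does not hold up: the Laplace step itself, done with a window of width $O(\log n)$ in the rescaled variable, costs only $O\bigl(n^{-1/2}(\log n)^3\bigr)$, as in the paper's final computation. Without a uniform-in-$\lambda_1$ treatment of the $q$-binomial factor, the claimed concavity of the exponent, the location of the maximum, and the identification of the limit $e^{\frac12\Sigma_{\alpha_1}(s,r)}$ are all unsubstantiated, so the proposal as written does not prove the theorem.
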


\subsection{Notations and conventions}

For the remainder of this paper, we fix two positive reals $s, r > 0$. It will become  clear that the window of scale $r/n^{3/2}$ is a natural choice for analyzing the behavior of $W_{\lambda}$. For brevity's sake, we write
\begin{align*}
    \delta = r/n^{3/2}.
\end{align*}
Comparing the m.g.f.\ of $ W_{\lambda}$ to the generating function \eqref{eqn:genfn_master} shows that $q$ and $t$ are related to $n$ by $ q = e^{-r/n^{3/2}} = e^{-\delta}$ and $ t = e^{-s/n^{1/2}} = e^{-\frac{sn}{r}\delta}$, and we assume so hereafter. We also choose $ \epsilon > 0$ such that $2 e (s+r)\epsilon / r < 1$.

In what follows, the asymptotic notations $ f(x) = \cO_{a}(g(x)) $ and $f(x) \lesssim_a g(x)$ will denote the fact that there exists a constant $C > 0$, depending only on $s$, $r$ and the parameter $a$, such that $ |f(x)| \leq C g(x) $ holds for all $ x $ in the prescribed range. If no parameter $a$ is involved, we simply write $f(x) = \cO(g(x))$ or $f(x) \lesssim g(x)$.

Along the proof, we will encounter large chunks of expressions. Since it is counter-productive and not aesthetic to carry them all the way through the proof, we will introduce generic symbols to replace them. First, we define $\mK_{\lambda, r,a,i}$ and $\mL_{\lambda, s, r}$ by
\begin{align*}
    \mK_{\lambda, r,a,i}
    := \lambda_{i}! i^{\lambda_{i}} \E_{\lambda} \left[ \prod_{k\geq 1} f_{i,a}(q^{k})^{m_{k}(\sigma_{i})} \right],
\end{align*}
and
\begin{align*}
    \mL_{\lambda, s, r}
    := \frac{ t^{\frac{\alpha_1^2}{2}n}q^{\frac{\alpha_1^2}{4}n^2} }{ \frac{1}{\delta^{n+1}} \int_{0}^{1} u^{\frac{sn}{r}-1}(1-u)^{n} \, \mathrm{d}u } \left( \sum_{a \geq 1} t^{a} \left( \prod_{i=1}^{n} \mK_{\lambda,r,a,i} \right) \right).
\end{align*}
For the definition of $\mK_{\lambda, r,a,i}$, we recall that $\sigma_i$ has the uniform distribution over $S_{\lambda_i}$ under~$\Prob_{\lambda}$ for each $i \in \llbracket 1, n \rrbracket $. Also, for convenience, we decompose~$\mL$ further into
\begin{align*}
    \mL_{\lambda, s, r} = \mL_{\mathrm{small},\lambda, s, r} + \mL_{\mathrm{large},\lambda, s, r},
\end{align*}
where $\mL_{\mathrm{small},\lambda, s, r}$ (respectively, $\mL_{\mathrm{large},\lambda, s, r}$) is the restriction of the sum in the definition of $\mL$ onto the range $a < \epsilon / \delta$ (respectively, $a \geq \epsilon / \delta$). Then, we define $\mF_{n,r,k}(u)$ and $\mG_{\lambda,s,r}(u)$ by
\begin{align*}
    \mF_{n,r,k}(u)
    := \frac{\delta^{k-1}}{k^2} \frac{1-u^k}{(1-u)^k},
\end{align*}
and
\begin{align*}
    \mG_{\lambda,s,r}(u)
    := t^{\frac{\alpha_1^2}{2}n}q^{\frac{\alpha_1^2}{4}n^2} \lambda_1! \sum_{\mu \vdash \lambda_1} \prod_{k\geq 1} \frac{\mF_{n,r,k}(u)^{\mu_k}}{\mu_k!}.
\end{align*}
The roles of these quantities will become clear as the proof proceeds. Finally, for these quantities, $s$, $r$ and $\lambda$ will be suppressed notationally whenever the dependence on these variables is clear from context.

\subsection{Separating the contribution of fixed points}

In this section, we provide a representation of the m.g.f.\ of $W_{\lambda}$ which is adequate for analyzing the effect of fixed points. Rewrite $M_{W_{\lambda}}$ as
\begin{align}
    \label{eqn:mgf_expand}
    M_{W_{\lambda}}(-s, -r)
    = t^{-\frac{1-\alpha_1^2}{2}n}q^{-\frac{1-\alpha_1^2}{4}n^2} \E_{\lambda}\left[ t^{d(\pi)} q^{maj(\pi)} \right],
\end{align}
where we recall that $\pi$ is uniformly distributed over $\cC_{\lambda}$ under $\Prob_{\lambda}$ and that $q$ and $t$ are defined by $q = e^{-r/n^{3/2}}$ and $t = e^{-s/n^{1/2}}$. To prevent the reader from being distracted by the jumble of computations, we first state the main result of this section. This is a combination of Propositions \ref{prop:est_common_factor}, \ref{prop:est_l_small}, and \ref{prop:est_l_large}.

\begin{proposition}
    \label{prop:mgf_expand_2}
    As $n\to\infty$, we have
    \begin{align*}
        M_{W_{\lambda}}(-s, -r)
        = \left(1 + \cO\big(n^{-1/2}\big)\right) e^{\frac{1}{2}\Sigma_{0}(s, r)} \left( \mL_{\mathrm{large}} + \mL_{\mathrm{small}}  \right),
    \end{align*}
    where $\mL_{\mathrm{small}}$ decays at least exponentially fast and $\mL_{\mathrm{large}}$ is asymptotically the ratio of two integrals
    \begin{align*}
    	\mL_{\mathrm{large}}
        = \left(1 + \cO\big(n^{-1/2}\big)\right)
        \frac{ \int_{0}^{e^{-\epsilon}} u^{\frac{sn}{r}-1}(1-u)^{n} \mG(u) \, \mathrm{d}u }{ \int_{0}^{1} u^{\frac{sn}{r}-1}(1-u)^{n} \, \mathrm{d}u }.
    \end{align*}
\end{proposition}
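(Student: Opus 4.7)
The natural first step is to combine \eqref{eqn:mgf_expand} with the generating function \eqref{eqn:genfn_master}. Since $|\cC_\lambda| = n!/\prod_i \lambda_i! i^{\lambda_i}$ and $\mK_{\lambda,r,a,i}$ absorbs exactly the factor $\lambda_i! i^{\lambda_i}$, the result rearranges into the exact factorization
\[
M_{W_\lambda}(-s,-r) \,=\, P_n \cdot \mL_{\lambda,s,r},
\]
where
\[
P_n \,:=\, t^{-n/2}q^{-n^2/4} \cdot \frac{\prod_{i=0}^n(1-q^i t)}{n!} \cdot \frac{1}{\delta^{n+1}}\int_0^1 u^{sn/r-1}(1-u)^n\,du,
\]
the $\alpha_1^2$-dependent exponents in $\mL_{\lambda,s,r}$ combining with those in \eqref{eqn:mgf_expand} to form $t^{-n/2}q^{-n^2/4}$. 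The proposition then decomposes into three independent asymptotic tasks: $P_n\to e^{\frac{1}{2}\Sigma_0(s,r)}$, $\mL_{\mathrm{small}}$ decays exponentially, and $\mL_{\mathrm{large}}$ matches the stated ratio of integrals.

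For $P_n$, I would take logarithms and treat each piece separately. The beta integral equals $\Gamma(sn/r)\Gamma(n+1)/\Gamma(sn/r+n+1)$, so the $n!$ cancels and Stirling's formula expands the $\Gamma$-ratio divided by $\delta^{n+1}$ to any desired order. For the $q$-Pochhammer product, I would use Euler--Maclaurin to replace $\sum_{i=0}^n\log(1-q^i t)$ by $\int_0^n\log(1-e^{-\delta x-s/n^{1/2}})\,dx$ plus boundary and derivative corrections, and then Taylor-expand the integrand in the small parameters $s/n^{1/2}$ and $\delta=r/n^{3/2}$. The divergent polynomial-in-$n$ contributions cancel exactly against $t^{-n/2}q^{-n^2/4}$ and $\delta^{-(n+1)}$ by construction, leaving the quadratic form $\frac{1}{2}\Sigma_0(s,r)$ modulo an $\cO(n^{-1/2})$ remainder.

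For $\mL_{\mathrm{small}}$, the upper bound in Lemma \ref{lemma:fiaq_est} together with $[a]_{q^k}\leq a$ gives $f_{i,a}(q^k)\leq a^i/i$, and Lemma \ref{lemma:genfn_num_cycles} then yields a crude polynomial bound of the form $\prod_i \mK_{\lambda,r,a,i}\leq C^n a^n$ for some absolute constant $C$. Summing $t^a$ times this bound over $a<\epsilon/\delta$ and dividing by the beta integral appearing in the denominator of $\mL_{\lambda,s,r}$ (whose Stirling asymptotics are known from the analysis of $P_n$) yields a quantity which, thanks to the calibration $2e(s+r)\epsilon/r<1$, decays as $e^{-cn}$ for some $c=c(s,r,\epsilon)>0$.

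The main obstacle is $\mL_{\mathrm{large}}$. Here I would perform the substitution $u=e^{-a\delta}$, under which $t^a=u^{sn/r}$ and the discrete spacing $\Delta a = 1$ corresponds to $\Delta u\approx -\delta u$, so that $\sum_{a\geq\epsilon/\delta}t^a\cdot(\cdots)\approx \frac{1}{\delta}\int_0^{e^{-\epsilon}}u^{sn/r-1}\cdot(\cdots)\,du$. The delicate part is matching $\prod_i\mK_{\lambda,r,a,i}$ to a function of $u$: for $i=1$, the exact identity $f_{1,a}(q^k)=[a]_{q^k}$, the asymptotic $[a]_{q^k}\sim (1-u^k)/(k\delta)$, and Lemma \ref{lemma:genfn_num_cycles} produce $\lambda_1!\sum_{\mu\vdash\lambda_1}\prod_k \mF_{n,r,k}(u)^{\mu_k}/\mu_k!$ after absorbing the leading power of $\delta$ into $t^{\alpha_1^2 n/2}q^{\alpha_1^2 n^2/4}$; for $i\geq 2$, the sandwich bound in Lemma \ref{lemma:fiaq_est} shows that the corresponding factors multiply together to yield $(1-u)^n$ up to a multiplicative $1+\cO(n^{-1/2})$ error. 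Controlling this accumulated multiplicative error uniformly in $u\in(0,e^{-\epsilon}]$ and in $\lambda$, together with the Riemann-sum approximation error, is the technical crux; the choice of $\epsilon$ satisfying $2e(s+r)\epsilon/r<1$ supplies the slack needed to absorb contributions near the boundary $u=e^{-\epsilon}$ where the sum-to-integral approximation is most tender.
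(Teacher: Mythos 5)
Your decomposition $M_{W_\lambda}=P_n\cdot\mL$ and the three-part strategy mirror the paper's Propositions \ref{prop:est_common_factor}, \ref{prop:est_l_small}, and \ref{prop:est_l_large} exactly. One genuine technical difference: for $P_n$ the paper avoids Euler--Maclaurin by factoring
\[
\prod_{j=0}^n(1-tq^j)=\exp\Big\{\sum_{j=0}^n g(x_j)\Big\}\prod_{j=0}^n x_j,
\qquad g(x)=\log\frac{1-e^{-x}}{x},\quad x_j=-\log(tq^j);
\]
since each $x_j=\cO(n^{-1/2})$ uniformly in $j$, the sum $\sum_j g(x_j)$ yields to a direct Taylor expansion, while $\prod_j x_j$ reproduces the beta integral exactly through the Gamma-function identity. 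The cancellations you describe as occurring ``by construction'' are thus carried out explicitly and transparently, and no Euler--Maclaurin correction terms need to be tracked. Your route should also work, but it is harder to see the cancellation.

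The concrete error is in $\mL_{\mathrm{small}}$. The intermediate bound $\prod_i\mK_{\lambda,r,a,i}\leq C^na^n$ with $C$ an absolute constant is false. Following your own chain of inequalities ($f_{i,a}(q^k)\leq a^i/i$ and Lemma~\ref{lemma:genfn_num_cycles}) one arrives at $\mK_{a,i}\leq(a^i+i\lambda_i)^{\lambda_i}$; already for $a=1$ and $\lambda=(1^n)$ the product over $i$ is $(n+1)^n$, which no $C^n$ can dominate. The paper instead uses the restrictions $a<\epsilon/\delta$ and (without loss) $n\leq\epsilon/\delta$ to absorb the $\lambda_i$-term: $a^i+i\lambda_i\leq a^i+n\leq(2\epsilon/\delta)^i$, giving the uniform bound $\prod_i\mK_{a,i}\leq(2\epsilon/\delta)^n$. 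Since this is also the worst case of your stated bound at $a\approx\epsilon/\delta$, the final exponential decay survives once the inequality is repaired, but the $a$-dependence you wrote is wrong for small $a$ and the claim is not true as stated. Also, a clarification on $\mL_{\mathrm{large}}$: the calibration $2e(s+r)\epsilon/r<1$ serves only to make $\mL_{\mathrm{small}}$ vanish; what the cutoff $a\geq\epsilon/\delta$ buys for $\mL_{\mathrm{large}}$ is the uniform bound $q^a\leq e^{-\epsilon}$, which keeps $1-q^x$ bounded away from $0$ so that the multiplicative errors in the estimate of $\prod_i\mK_{a,i}$ (Lemma~\ref{lemma:est_kai_detailed}) accumulate to only $\cO(n^{-1/2})$, uniformly in $\lambda$ and $a$.
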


Returning to the problem of estimating \eqref{eqn:mgf_expand}, we will adopt \eqref{eqn:genfn_master} as our starting point. Since $\lvert t \rvert < 1$ and $\lvert q \rvert < 1$, the generating function \eqref{eqn:genfn_master} converges absolutely. We first analyze the asymptotic behavior of the common factor $\prod_{j=0}^{n} (1-tq^{j})$.

\begin{proposition}
    \label{prop:est_common_factor}
    As $n\to\infty$, we have
    \begin{align}
        \label{eqn:est_common_factor}
    	\frac{1}{n!} \prod_{j=0}^{n} (1-tq^{j})
    	= \left(1 + \cO\big(n^{-1/2}\big)\right)
    	\frac{ t^{\frac{1}{2}n}q^{\frac{1}{4}n^2} e^{\frac{1}{2}\Sigma_{0}(s, r)} }{ \frac{1}{\delta^{n+1}} \int_{0}^{1} u^{\frac{sn}{r}-1}(1-u)^{n} \, \mathrm{d}u }.
    \end{align}
    Consequently,
    \begin{align}
        \label{prop:mgf_expand_3}
        M_{W_{\lambda}}(-s, -r) = \left(1 + \cO\big(n^{-1/2}\big)\right) e^{\frac{1}{2}\Sigma_{0}(s, r)} \mL.
    \end{align}
\end{proposition}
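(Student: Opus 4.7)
The plan is to split the proposition into two pieces: the sharp asymptotic \eqref{eqn:est_common_factor} for the common factor, and the ensuing formula \eqref{prop:mgf_expand_3}, which is essentially bookkeeping. For the second piece, I would combine \eqref{eqn:genfn_master} with $|\cC_\lambda| = n!/\prod_i \lambda_i! i^{\lambda_i}$ to write
\[
\E_{\lambda}\left[t^{d(\pi)} q^{maj(\pi)}\right] = \frac{\prod_{j=0}^n(1 - tq^j)}{n!} \sum_{a \geq 1} t^a \prod_{i=1}^n \mK_{\lambda,r,a,i}.
\]
Multiplying by the normalization $t^{-(1-\alpha_1^2)n/2} q^{-(1-\alpha_1^2)n^2/4}$ from \eqref{eqn:mgf_expand} and substituting \eqref{eqn:est_common_factor} collapses the powers of $t$ and $q$ into the prefactor $t^{\alpha_1^2 n/2} q^{\alpha_1^2 n^2/4}$ that appears in the definition of $\mL_{\lambda,s,r}$, yielding \eqref{prop:mgf_expand_3}.

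For \eqref{eqn:est_common_factor} itself, set $u_j := s/n^{1/2} + jr/n^{3/2}$, so that $1 - tq^j = 1 - e^{-u_j}$. A useful reformulation of the right-hand side comes from a beta-integral identity: since $B(sn/r, n+1) = n!/\prod_{j=0}^n (sn/r + j)$ and $\delta(sn/r + j) = u_j$,
\[
\frac{1}{\delta^{n+1}} \int_0^1 u^{sn/r - 1}(1-u)^n \, \mathrm{d}u = \frac{n!}{\prod_{j=0}^n u_j}.
\]
So \eqref{eqn:est_common_factor} reduces to
\[
\prod_{j=0}^n \frac{1 - e^{-u_j}}{u_j} = \bigl(1 + \cO(n^{-1/2})\bigr)\, t^{n/2} q^{n^2/4} e^{\frac{1}{2}\Sigma_0(s,r)}.
\]
Taking logs and invoking the Taylor expansion $\log\bigl((1 - e^{-u})/u\bigr) = -u/2 + u^2/24 + \cO(u^3)$, which is uniform on the bounded interval $u \in (0, s+r]$ (note $u_j \leq (s+r)n^{-1/2}$), reduces the task to estimating $\sum u_j$, $\sum u_j^2$, and $\sum u_j^3$. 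Direct arithmetic gives $\sum_{j=0}^n u_j = n^{1/2}(s + r/2) + \cO(n^{-1/2})$, which matches $-\log(t^{n/2} q^{n^2/4}) = n^{1/2}(s/2 + r/4)$ up to the desired error; the quadratic sum gives $\sum u_j^2 = s^2 + sr + r^2/3 + \cO(n^{-1})$, whose factor of $1/24$ reproduces $\Sigma_0(s,r)/2$ in view of the explicit $(\alpha = 0)$ entries of $\Sigma_\alpha$; and the cubic tail satisfies $\sum u_j^3 \lesssim n \cdot n^{-3/2} = n^{-1/2}$.

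The main obstacle I anticipate is calibrating error rates: each of the three contributions sits at exactly the $\cO(n^{-1/2})$ scale, so the Taylor expansion must be carried to second order — stopping at the linear term would leave an $\cO(1)$ residue — and the arithmetic for $\sum u_j$ and $\sum u_j^2$ must be tracked to subleading order. The near-perfect matching of these three terms with the prefactor $t^{n/2} q^{n^2/4} e^{\Sigma_0(s,r)/2}$ is precisely what forces the centering constants $\tfrac{1-\alpha_1^2}{2}n$ and $\tfrac{1-\alpha_1^2}{4}n^2$ in $W_\lambda$ as well as the $\alpha = 0$ specialization of $\Sigma_\alpha$ appearing here; any $\alpha_1$-dependent corrections to the covariance must emerge from the remaining factor $\mL_{\lambda,s,r}$ in subsequent sections.
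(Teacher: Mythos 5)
Your proposal is correct and follows essentially the same route as the paper: factor $1 - tq^j = (1 - e^{-u_j})$ with $u_j = -\log(tq^j)$, convert $\frac{1}{n!}\prod u_j$ to a beta integral, and Taylor-expand $\log\bigl((1-e^{-u})/u\bigr) = -u/2 + u^2/24 + \cO(u^3)$ so that $-\tfrac{1}{2}\sum u_j$ reproduces $t^{n/2}q^{n^2/4}$, $\tfrac{1}{24}\sum u_j^2$ reproduces $\tfrac{1}{2}\Sigma_0(s,r)$, and $\sum u_j^3 = \cO(n^{-1/2})$ furnishes the error. The only cosmetic difference is the order of steps (you invert the beta identity first and estimate $\prod(1-e^{-u_j})/u_j$; the paper introduces $g(x)=\log((1-e^{-x})/x)$, estimates $\sum g(x_j)$, then handles $\tfrac{1}{n!}\prod x_j$ by the beta integral), but the arithmetic and the error calibration are identical.
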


\begin{proof}[Proof of Proposition \ref{prop:est_common_factor}]
    Plugging the generating function \eqref{eqn:genfn_master} into the m.g.f. \eqref{eqn:mgf_expand} and utilizing the definition of $\mK_{a,i}$, we see that
    \begin{align*}
        M_{W_{\lambda}}(-s, -r)
        &= t^{-\frac{1-\alpha_1^2}{2}n}q^{-\frac{1-\alpha_1^2}{4}n^2} \left( \prod_{j=0}^{n} (1-tq^{j}) \right) \frac{1}{\lvert \cC_{\lambda} \rvert} \sum_{a \geq 1} t^{a} \left( \prod_{i=1}^{n} \frac{\mK_{a,i}}{\lambda_i! i^{\lambda_i}} \right) \\
        &= t^{-\frac{1-\alpha_1^2}{2}n}q^{-\frac{1-\alpha_1^2}{4}n^2} \left( \frac{1}{n!} \prod_{j=0}^{n} (1-tq^{j}) \right) \sum_{a \geq 1} t^{a} \left( \prod_{i=1}^{n} \mK_{a,i} \right),
    \end{align*}
    where the identity $\lvert \cC_{\lambda} \rvert = n!/\prod_i \lambda_i!i^{\lambda_i}$ is used in the second equality. Then, in view of the definition of $\mL$, \eqref{eqn:est_common_factor} will imply \eqref{prop:mgf_expand_3}. So it remains to prove \eqref{eqn:est_common_factor}.
    
    Write $g(x) = \log\big( \frac{1 - e^{-x}}{x} \big)$ and $x_j = - \log\left( t q^j \right)$. Then,
    \begin{align*}
        \frac{1}{n!} \prod_{j=0}^{n} (1-tq^{j})
        &= \exp\left\{ \sum_{j=0}^{n} g\left( x_j \right) \right\} \frac{1}{n!} \prod_{j=0}^{n} x_j.
    \end{align*}
    Note that $x_j = \frac{s}{n^{1/2}} + \frac{jr}{n^{3/2}} = \cO(n^{-1/2})$ uniformly in $0 \leq j \leq n$. Plugging this to the Taylor expansion $ g(x) = -\frac{1}{2}x + \frac{1}{24}x^2 + \cO(x^3)$ near $x = 0$, we obtain
    \begin{align*}
        \sum_{j=0}^{n} g\left( x_j \right)
        &= \sum_{j=0}^{n} \left[ -\frac{1}{2} \left(\frac{s}{n^{1/2}} + \frac{jr}{n^{3/2}}\right) + \frac{1}{24} \left( \frac{s}{n^{1/2}} + \frac{jr}{n^{3/2}} \right)^2 + \cO\big(n^{-3/2}\big) \right] \\
        &= - \frac{s n^{1/2}}{2} - \frac{rn^{1/2}}{4} + \frac{1}{2}\Sigma_0(s, r) + \cO\big(n^{-1/2}\big).
    \end{align*}
    Exponentiating both sides produces the numerator of \eqref{eqn:est_common_factor} as well as the desired relative error of the magnitude $\cO\big(n^{-1/2}\big)$. For the denominator of \eqref{eqn:est_common_factor}, we have
    \begin{align}
        \label{eqn:est_02}
        \begin{split}
            \frac{1}{n!} \prod_{j=0}^{n} x_j
            &= \frac{1}{n!} \prod_{j=0}^{n} \left( \left( \frac{n s}{r} + j \right)\delta \right)
             = \delta^{n+1} \frac{ (\frac{sn}{r}+n)! }{ (\frac{sn}{r}-1)!n! } \\
            &= \left[ \frac{1}{\delta^{n+1}} \int_{0}^{1} u^{\frac{sn}{r}-1}(1-u)^{n} \, \mathrm{d}u \right]^{-1},
        \end{split}
    \end{align}
    where we exploited the beta function identity $\int_{0}^{1} u^a (1-u)^ b \, \mathrm{d}u = \frac{a!b!}{(a+b+1)!}$, which holds for any reals $a, b > -1$, in the last step.
\end{proof}

Proposition \ref{prop:est_common_factor} illuminates the meaning of $\mL$ as the normalized m.g.f. Next, we make a simple observation on $\mK_{a,i}$ to be used later.

\begin{lemma}
    \label{lemma:est_kai_elem}
    We have
    \begin{align}
        \label{eqn:est_kai_elem}
        (if_{i,a}(q))^{\lambda_i}
        \leq \mK_{a,i}
        \leq \lambda_{i}! i^{\lambda_{i}} \binom{\lambda_{i} + f_{i,a}(q) - 1}{\lambda_{i}}
        \leq (if_{i,a}(q))^{\lambda_i} e^{\lambda_{i}^{2}/f_{i,a}(q)}.
    \end{align}
\end{lemma}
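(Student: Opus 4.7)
The plan is to apply Lemma \ref{lemma:genfn_num_cycles} to rewrite $\mK_{a,i}$ as an explicit sum over partitions of $\lambda_i$, and then bound this sum in three different ways. Specifically, plugging $x_k = f_{i,a}(q^k)$ and $n = \lambda_i$ into Lemma \ref{lemma:genfn_num_cycles} gives
\begin{align*}
    \mK_{a,i} = \lambda_i! \, i^{\lambda_i} \sum_{\mu \vdash \lambda_i} \prod_{k \geq 1} \frac{1}{\mu_k!}\left( \frac{f_{i,a}(q^k)}{k} \right)^{\mu_k}.
\end{align*}
All terms in this sum are non-negative (all quantities involved are non-negative by Lemma \ref{lemma:fiaq_formula}), so to get the lower bound I would isolate the single term corresponding to the partition $\mu = (1^{\lambda_i})$ (that is, $\mu_1 = \lambda_i$ and $\mu_k = 0$ for $k \geq 2$), which contributes $\lambda_i! \, i^{\lambda_i} \cdot \frac{1}{\lambda_i!} f_{i,a}(q)^{\lambda_i} = (if_{i,a}(q))^{\lambda_i}$.

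For the middle upper bound, the key observation is that Lemma \ref{lemma:fiaq_formula} gives $f_{i,a}(q) = \sum_{m \geq 0} J_{i,m,a} q^m$ with $J_{i,m,a} \geq 0$, so $f_{i,a}$ is non-decreasing on $[0,1]$; in particular $f_{i,a}(q^k) \leq f_{i,a}(q)$ for every $k \geq 1$. Substituting this dominating value into each factor of the partition sum yields
\begin{align*}
    \mK_{a,i} \leq \lambda_i! \, i^{\lambda_i} \sum_{\mu \vdash \lambda_i} \prod_{k \geq 1} \frac{1}{\mu_k!} \left( \frac{f_{i,a}(q)}{k} \right)^{\mu_k} = \lambda_i! \, i^{\lambda_i} \binom{\lambda_i + f_{i,a}(q) - 1}{\lambda_i},
\end{align*}
where the last equality is exactly Lemma \ref{lemma:genfn_num_cycles} applied to a uniform $\sigma \in S_{\lambda_i}$ with constant variable $x_k = f_{i,a}(q)$, matching the sanity check noted immediately after the proof of Proposition 3.1 that $\E[x^{\sum_k m_k(\sigma)}] = \binom{n + x - 1}{n}$.

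For the final inequality, I would write out the extended binomial as a rising factorial,
\begin{align*}
    \lambda_i! \, i^{\lambda_i} \binom{\lambda_i + f_{i,a}(q) - 1}{\lambda_i} = i^{\lambda_i} \prod_{j=0}^{\lambda_i - 1} (f_{i,a}(q) + j) = (if_{i,a}(q))^{\lambda_i} \prod_{j=0}^{\lambda_i - 1} \left(1 + \frac{j}{f_{i,a}(q)}\right),
\end{align*}
and then apply $1 + x \leq e^x$ termwise to estimate $\prod_{j=0}^{\lambda_i - 1}(1 + j/f_{i,a}(q)) \leq \exp\big(\sum_{j=0}^{\lambda_i - 1} j/f_{i,a}(q)\big) = \exp(\lambda_i(\lambda_i - 1)/(2f_{i,a}(q))) \leq \exp(\lambda_i^2/f_{i,a}(q))$. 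Each step is elementary; the only mild subtlety is noticing that the monotonicity of $f_{i,a}$ on $[0,1]$ is a (free) byproduct of Lemma \ref{lemma:fiaq_formula}, which is what makes the middle inequality collapse to the clean binomial form.
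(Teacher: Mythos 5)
Your proof is correct and follows essentially the same route as the paper: the lower bound by isolating the $\mu_1=\lambda_i$ term (the paper phrases this as the contribution of the identity permutation), the middle bound via $f_{i,a}(q^k)\leq f_{i,a}(q)$ together with $\E\big[x^{\sum_k m_k(\sigma)}\big]=\binom{n+x-1}{n}$, and the final bound from $\binom{a+b-1}{a}\leq \frac{b^a}{a!}e^{a^2/b}$. The only difference is cosmetic: you expand the expectation as the explicit sum over partitions and you prove the last binomial inequality (via the rising factorial and $1+x\leq e^x$), whereas the paper merely cites it.
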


\begin{proof}[Proof of Lemma \ref{lemma:est_kai_elem}]
    The lower bound is easily obtained by focusing on the contribution of $\sigma_i$'s with $m_1(\sigma_i) = \lambda_i$. Indeed, since $m_1(\sigma_i) = \lambda_i$ is achieved only by the identity permutation in $S_{\lambda_i}$, we have $\Prob_{\lambda}[m_1(\sigma_i) = \lambda_i] = \frac{1}{\lambda_i!}$, and so,
    \begin{align*}
        \mK_{a,i}
        \geq \lambda_i! i^{\lambda_i} f_{i,a}(q)^{\lambda_{i}} \Prob_{\lambda}[m_1(\sigma_i) = \lambda_i]
        = (if_{i,a}(q))^{\lambda_i}.
    \end{align*}
    Now, we focus on the upper bound. In view of Lemma \ref{lemma:fiaq_formula}, $f_{i,a}(q^k) \leq f_{i,a}(q)$, and so, by Lemma \ref{lemma:genfn_num_cycles}, we have
    \begin{align*}
        \mK_{i,a}
        \leq \lambda_{i}! i^{\lambda_{i}} \E_{\lambda}\left[ f_{i,a}(q)^{\sum_k m_{k}(\sigma_{i})} \right]
        = \lambda_{i}! i^{\lambda_{i}} \binom{\lambda_{i} + f_{i,a}(q) - 1}{\lambda_{i}}.
    \end{align*}
    Therefore, \eqref{eqn:est_kai_elem} follows from the inequality $\binom{a+b-1}{a} \leq \frac{b^a}{a!}e^{a^2/b}$.
\end{proof}

This easily leads to the following crude but useful estimate.

\begin{proposition}
    \label{prop:est_l_small}
    As $n\to\infty$, we have
    \begin{align}
        \label{eqn:est_l_small}
    	\mL_{\mathrm{small}}
    	\lesssim n^{1/2} \left( 2 e(s+r)\epsilon/r \right)^{n+1}.
    \end{align}
\end{proposition}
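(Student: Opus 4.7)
The plan is to bound $\mL_{\mathrm{small}}$ by estimating each ingredient crudely and then reducing the partial sum over $a$ to a single binomial via Fulman's descent-only formula. First, since $s,r>0$ forces $t,q\in(0,1)$, the prefactor satisfies $t^{\alpha_1^2 n/2}q^{\alpha_1^2 n^2/4}\le 1$; and by \eqref{eqn:est_02}, the denominator in the definition of $\mL$ equals $n!/\prod_{j=0}^n x_j$ with $x_j=\delta(sn/r+j)\le (s+r)/n^{1/2}$, so $\prod_{j=0}^n x_j\le \bigl((s+r)/n^{1/2}\bigr)^{n+1}$, giving
\[
    \mL_{\mathrm{small}}\le \frac{\bigl((s+r)/n^{1/2}\bigr)^{n+1}}{n!}\sum_{a=1}^{\lfloor\epsilon/\delta\rfloor}t^a\prod_{i=1}^n \mK_{a,i}.
\]

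The heart of the proof is to control $\prod_i\mK_{a,i}$. I would apply Lemma \ref{lemma:est_kai_elem} to get $\mK_{a,i}\le\lambda_i!\,i^{\lambda_i}\binom{\lambda_i+f_{i,a}(q)-1}{\lambda_i}$, and then use the fact that $f_{i,a}$ is a power series in $q$ with nonnegative coefficients (Lemma \ref{lemma:fiaq_formula}) to replace $f_{i,a}(q)$ by $f_{i,a}(1)$. Since $\prod_i \lambda_i!\,i^{\lambda_i}=n!/\lvert\cC_\lambda\rvert$, this yields
\[
    \prod_i \mK_{a,i}\le \frac{n!}{\lvert\cC_\lambda\rvert}\prod_i\binom{\lambda_i+f_{i,a}(1)-1}{\lambda_i}.
\]
At $q=1$, the generating function \eqref{eqn:genfn_master} specializes (as noted just after Proposition 3.1) to Fulman's descent identity
\[
    \sum_{a\ge 1}t^a\prod_i\binom{\lambda_i+f_{i,a}(1)-1}{\lambda_i}=\frac{\sum_{\pi\in\cC_\lambda}t^{d(\pi)}}{(1-t)^{n+1}}.
\]
Expanding $(1-t)^{-(n+1)}=\sum_k\binom{n+k}{n}t^k$ and reading off the coefficient of $t^a$ gives
\[
    \prod_i\binom{\lambda_i+f_{i,a}(1)-1}{\lambda_i}=\sum_{\pi\in\cC_\lambda,\,d(\pi)\le a}\binom{n+a-d(\pi)}{n}\le \lvert\cC_\lambda\rvert\binom{n+a-1}{n},
\]
where the last inequality uses $d(\pi)\ge 1$.

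Finally, the hockey-stick identity sums these to $\sum_{a=1}^{\lfloor\epsilon/\delta\rfloor}\binom{n+a-1}{n}=\binom{n+\lfloor\epsilon/\delta\rfloor}{n+1}\le (n+\epsilon/\delta)^{n+1}/(n+1)!$, and for $n$ large enough that $n\le \epsilon/\delta=\epsilon n^{3/2}/r$, one has $n+\epsilon/\delta\le 2\epsilon n^{3/2}/r$. Substituting back,
\[
    \mL_{\mathrm{small}}\lesssim \frac{\bigl((s+r)/n^{1/2}\bigr)^{n+1}(2\epsilon n^{3/2}/r)^{n+1}}{(n+1)!}=\frac{(2\epsilon(s+r)n/r)^{n+1}}{(n+1)!}.
\]
Stirling's approximation $(n+1)!\gtrsim n^{n+1}e^{-n}n^{1/2}$ absorbs the $n^{n+1}$ factor and produces the base constant $2e(s+r)\epsilon/r$, which is $<1$ by the choice of $\epsilon$; the leftover polynomial factor is then harmless, so the claimed bound follows (in fact with $n^{-1/2}$ in place of $n^{1/2}$). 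The main technical point is recognizing that the apparently very loose estimate $f_{i,a}(q)\le f_{i,a}(1)$ is actually the right move, because it unlocks Fulman's descent identity and collapses the full product over $i$ into a single binomial; attacking each $\mK_{a,i}$ separately in terms of $a$ and $\lambda_i$ would be awkward when $f_{i,a}(q)$ is small for large $i$.
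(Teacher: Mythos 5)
Your proof is correct, but it takes a genuinely different route from the paper. The paper bounds $\prod_i \mK_{a,i}$ directly and crudely: it uses Lemma~\ref{lemma:fiaq_est} to get $if_{i,a}(q)\le [a]_q^i\le a^i$, then bounds $\lambda_i!\,i^{\lambda_i}\binom{\lambda_i+f_{i,a}(q)-1}{\lambda_i}\le (i\lambda_i+a^i)^{\lambda_i}\le (2\epsilon/\delta)^{i\lambda_i}$ for $a<\epsilon/\delta$ and $n\le\epsilon/\delta$, so that $\prod_i\mK_{a,i}\le(2\epsilon/\delta)^n$ uniformly in $a$; the sum over $a$ is then handled by the trivial union bound $\sum_a\le(\epsilon/\delta)\max_a$. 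Your approach instead replaces $f_{i,a}(q)$ by $f_{i,a}(1)$ (valid because the extended binomial coefficient $y\mapsto\binom{\lambda_i+y-1}{\lambda_i}$ is increasing for $y\ge 0$ and $f_{i,a}$ has nonnegative coefficients), invokes Fulman's $q\uparrow 1$ specialization of~\eqref{eqn:genfn_master} to collapse $\prod_i\binom{\lambda_i+f_{i,a}(1)-1}{\lambda_i}$ into a single binomial $\le|\cC_\lambda|\binom{n+a-1}{n}$, and then sums over $a$ exactly via the hockey-stick identity. Both arguments land on the same base constant $2e(s+r)\epsilon/r$; yours is slightly sharper in the polynomial prefactor (producing $n^{-1/2}$ rather than $n^{1/2}$, which of course still implies the stated bound), because you use Stirling's asymptotics on $(n+1)!$ rather than the cruder comparison the paper employs. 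The paper's argument is more self-contained and elementary, while yours is more structural: it leverages the exact combinatorial identity already recorded in the text, which is an aesthetically pleasing observation even if it is not strictly necessary for a crude bound on an exponentially small quantity. Both handle the finitely many $n$ with $n>\epsilon/\delta$ by absorbing their contribution into the implicit constant.
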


\begin{proof}[Proof of Proposition \ref{prop:est_l_small}]
    Let $ a < \epsilon / \delta $. We first consider the case $ n \leq \epsilon / \delta$. By Lemma \ref{lemma:fiaq_est}, we have $if_{i,a}(q) \leq [a]_q^i \leq a^i$, and so,
    \begin{align*}
        \lambda_i! i^{\lambda_i} \binom{\lambda_i + f_{i,a}(q) - 1}{\lambda_i}
        \leq (i\lambda_i + if_{i,a}(q))^{\lambda_i}
        \leq (i\lambda_i + a^i)^{\lambda_i}
    \end{align*}
    Then, since $i\lambda_i + a^i \leq n + a^i \leq (2\epsilon / \delta)^i$, we see that
    \begin{align*}
    	\prod_{i=1}^{n} \mK_{a,i}
    	\stackrel{\text{\eqref{eqn:est_kai_elem}}}\leq \prod_{i=1}^{n} \lambda_{i}! i^{\lambda_{i}} \binom{\lambda_{i} + f_{i,a}(q) - 1}{\lambda_{i}}
    	\leq \prod_{i=1}^{n} (i \lambda_{i} + a^{i})^{\lambda_{i}}
    	\leq (2\epsilon / \delta)^n.
    \end{align*}
    By taking the union bound, we have
    \begin{align*}
        \sum_{1 \leq a < \epsilon / \delta} t^a \left( \prod_{i=1}^{n} \mK_{a,i} \right)
        \leq (\epsilon /\delta) \max_{a < \epsilon / \delta} \left( \prod_{i=1}^{n} \mK_{a,i} \right)
        \leq (2\epsilon / \delta)^{n+1}.
    \end{align*}
    Also, by Stirling's approximation and the trivial bound, $t^{\frac{\alpha_1^2}{2}n}q^{\frac{\alpha_1^2}{4}n^2} \leq 1$, we get
    \begin{align*}
        \frac{ t^{\frac{\alpha_1^2}{2}n}q^{\frac{\alpha_1^2}{4}n^2} }{ \frac{1}{\delta^{n+1}} \int_{0}^{1} u^{\frac{sn}{r}-1}(1-u)^{n} \, \mathrm{d}u }
        &\stackrel{\text{\eqref{eqn:est_02}}}\leq \frac{1}{n!} \prod_{j=0}^{n} \left( \left( \frac{n s}{r} + j \right)\delta\right) \\
        &\leq \frac{1}{n!} \left( \frac{s+r}{r}n\delta \right)^{n+1}
        \lesssim n^{1/2} \left( e(s+r)\delta/r \right)^{n+1}.
    \end{align*}
    Combining these altogether, it follows that \eqref{eqn:est_l_small} holds for $n \leq \epsilon / \delta$. Since there are only finitely many $\lambda$'s for which $n > \epsilon / \delta$, we may absorb their contributions to the implicit bound, and thus, \eqref{eqn:est_l_small} holds unconditionally.
\end{proof}

Our next goal is to estimate $\mL_{\mathrm{large}}$. From now on, we focus on the case $ a \geq \epsilon / \delta $. A trivial but crucial observation is that $0 \leq q^a \leq q^{\epsilon/\delta} = e^{-\epsilon}$ holds, hence $q^a$ is uniformly away from $1$. This fact will be extensively used in the sequel, and is indeed one of the main reasons for separating the small range, $a \leq \epsilon / \delta$, from our main computation. We begin by improving the estimate on $\mK_{i,a}$'s.

\begin{lemma}
    \label{lemma:est_kai_detailed}
    For any integer $a \geq \epsilon / \delta$ and for any real $x$ satisfying $x \geq \epsilon/\delta$ and $|x - a| \leq 1$, we have
    \begin{align}
        \label{eqn:est_kai_detailed}
        \prod_{i\geq 1} \mK_{a,i}
        = \frac{1 + \cO\big(n^{-1/2}\big)}{\delta^n} \left( 1-q^x  \right)^{n} \lambda_1 ! \E_{\lambda} \left[ \prod_{k\geq 1} (k \mF_k(q^x))^{m_k(\sigma_1)} \right]
    \end{align}
\end{lemma}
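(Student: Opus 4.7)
The plan is to separate the factor $\mK_{a,1}$, which carries the nontrivial expectation over $\sigma_1$, from the remaining factors $\mK_{a,i}$ with $i \geq 2$, which are essentially deterministic powers of $[a]_q$. I would prove the two intermediate estimates
\[
    \prod_{i \geq 2} \mK_{a,i}
    = \big(1 + \cO(n^{-1/2})\big) \left( \frac{1-q^{x}}{\delta} \right)^{n-\lambda_1},
\]
and
\[
    \mK_{a,1}
    = \big(1 + \cO(n^{-1/2})\big)\, \lambda_{1}! \, \E_{\lambda}\!\left[ \prod_{k\geq 1} \left( \frac{1-q^{xk}}{k\delta} \right)^{m_k(\sigma_1)} \right].
\]
Multiplying them and invoking the algebraic identity
\[
    \left(\frac{1-q^x}{\delta}\right)^{\lambda_1} \prod_{k \geq 1} (k\mF_k(q^x))^{m_k(\sigma_1)}
    = \prod_{k \geq 1} \left( \frac{1-q^{xk}}{k\delta} \right)^{m_k(\sigma_1)},
\]
which follows from $\sum_{k} k\, m_k(\sigma_1) = \lambda_1$ and the definition of $\mF_k$, yields exactly the right-hand side of \eqref{eqn:est_kai_detailed}.

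For the $i \geq 2$ estimate, I would sandwich $\mK_{a,i}$ using Lemma~\ref{lemma:est_kai_elem} to obtain $\mK_{a,i} = (if_{i,a}(q))^{\lambda_i}\big(1 + \cO(\lambda_i^2/f_{i,a}(q))\big)$. Lemma~\ref{lemma:fiaq_est} together with the lower bound $[a]_q \geq (1-e^{-\epsilon})/\delta$ (valid since $a \geq \epsilon/\delta$) then gives $i f_{i,a}(q) = [a]_{q}^{i}\big(1 + \cO(i\, \delta^{i/2})\big)$ and $\lambda_i^2/f_{i,a}(q) \lesssim \lambda_i^2 i \delta^{i}$. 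Using $\lambda_i \leq n/i$, both error contributions sum geometrically in $i$ to $\cO(n^{-1/2})$ overall, while the deterministic prefactor simplifies via $\prod_{i \geq 2} [a]_q^{i\lambda_i} = [a]_q^{n-\lambda_1}$ and $\delta/(1-q) = 1 + \cO(\delta)$. The replacement of $a$ by $x$ is then justified by $(1-q^a)/(1-q^x) = 1 + \cO(\delta)$, which follows from the mean value theorem using $|x-a| \leq 1$ and $\min(a,x) \geq \epsilon/\delta$; raising this ratio to the $(n-\lambda_1)$-th power costs only another $1 + \cO(n^{-1/2})$.

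For the $i = 1$ estimate, I would use $f_{1,a}(q^k) = [a]_{q^k} = (1-q^{ak})/(1-q^k)$ and compare termwise with $(1-q^{xk})/(k\delta)$ via
\[
    \frac{f_{1,a}(q^k)}{(1-q^{xk})/(k\delta)}
    = \frac{k\delta}{1-q^k} \cdot \frac{1-q^{ak}}{1-q^{xk}}.
\]
Only indices $k \leq \lambda_1 \leq n$ actually contribute (as $m_k(\sigma_1) = 0$ otherwise), so $k\delta \leq r/n^{1/2}$, and a Taylor expansion gives $k\delta/(1-q^k) = 1 + \cO(k\delta)$; the second factor is $1 + \cO(k\delta e^{-\epsilon k})$ by the same mean value argument applied to $q^{ak}$ and $q^{xk}$, using $\min(a,x) \geq \epsilon/\delta$ and $|x-a| \leq 1$. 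Taking the $k$-product weighted by $m_k(\sigma_1)$ and passing to the logarithm, the total relative error is $\cO\big(\delta \sum_k k\, m_k(\sigma_1)\big) = \cO(\delta \lambda_1) = \cO(n^{-1/2})$ uniformly in $\sigma_1$, and so factors out of the expectation.

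The main technical obstacle is the uniform control of multiplicative errors across a product that is indexed simultaneously by $i$ and $k$ and raised to potentially large exponents $\lambda_i$. The hypothesis $a \geq \epsilon/\delta$ is essential here, as it is precisely what keeps $q^a$ bounded away from $1$ and thereby makes the $i \geq 2$ factors amenable to the otherwise crude bounds supplied by Lemmas~\ref{lemma:est_kai_elem} and~\ref{lemma:fiaq_est}.
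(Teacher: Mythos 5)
Your proposal is correct and follows essentially the same route as the paper: you separate $\mK_{a,1}$ from the product $\prod_{i\ge 2}\mK_{a,i}$, control the latter via Lemma~\ref{lemma:est_kai_elem} together with the estimate $if_{i,a}(q)\approx[a]_q^i$ from Lemma~\ref{lemma:fiaq_est}, and control the former by comparing $[a]_{q^k}$ termwise with $(1-q^{kx})/(k\delta)$ before passing to the expectation. The paper makes this comparison once, uniformly in $k\in\llbracket 1,n\rrbracket$, and reuses it with $k=1$ for the $i\ge 2$ factors, whereas you first land on $[a]_q^{n-\lambda_1}$ and then replace $a$ by $x$; the two are interchangeable. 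One small caution in your version: when you write $if_{i,a}(q)=[a]_q^i\big(1+\cO(i\delta^{i/2})\big)$ and $\lambda_i^2/f_{i,a}(q)\lesssim\lambda_i^2 i\delta^i$, you should make sure the implicit constants do not silently depend on $i$ — the paper sidesteps this by deliberately using the weaker uniform bound $[a]_q^i\gtrsim\delta^{-2}$ (so $1/f_{i,a}(q)\lesssim i\delta^2$) for all $i\ge 2$, which keeps the constant $i$-independent while still giving a total error of size $n^2\delta^2\lesssim n^{-1}$. Your tighter bounds can be made uniform in $i$ as well (since $[a]_q\ge(1-e^{-\epsilon})/\delta>1$ for $n$ large, so the relevant quantities decay geometrically in $i$ with a fixed ratio), but this should be said explicitly.
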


\begin{proof}[Proof of Lemma \ref{lemma:est_kai_detailed}]
    Pick $N \in \bN$ so that $N^{3/2} \geq \frac{r}{1-e^{-\epsilon}}$ and assume that $n \geq N$. Let $x \in \bR$ be such that $x \geq \epsilon/\delta$ and $|x - a| \leq 1$. Then, for $k \in \llbracket 1, n \rrbracket$,
    \begin{align}
        \label{eqn:est_01}
        \begin{split}
        \frac{[a]_{q^k}}{(1-q^{kx})/k\delta}
        = \frac{k\delta}{1-e^{k\delta}}\left( 1 + \frac{q^{kx}}{1-q^{kx}}\left(1 - q^{k(a-x)}\right) \right)
        = 1 + \cO(k \delta).
        \end{split}
    \end{align}
    In particular, there exists a constant $c > 0$ such that $[a]_q \geq e^{-c\delta} \cdot \frac{1 - q^x}{\delta}$. Then, by our choice of $N$, we have $\frac{1-q^x}{\delta} \geq \frac{1-e^{-\epsilon}}{\delta} \geq 1$. Thus, for each $0 \leq p_0 \leq p \leq n$,
    \begin{align}
        \label{eqn:est_012}
        [a]_q^p
        \geq e^{-cp\delta} \left( \frac{1 - q^x}{\delta} \right)^p
        \geq e^{-cr} \left( \frac{1 - e^{-\epsilon}}{\delta} \right)^{p_0}.
    \end{align}
    The purpose of this explicit estimate is to reassure that the bound in $[a]_q^p \gtrsim_{p_0} \delta^{-p_0}$ depends only on $p_0$, $s$ and $r$. This will be crucial for producing uniform estimates in the next step.
    
    With these estimations at hand, we examine $\prod_{i\geq 2} \mK_{a,i}$ first. Let $i \in \llbracket 2, n \rrbracket$. By Lemma~\ref{lemma:fiaq_est}, \eqref{eqn:est_012} with $(p_0, p) = \left(1, \frac{i}{2}\right)$, and \eqref{eqn:est_01} with $k = 1$,
    \begin{align*}
        if_{i,a}(q)
        = [a]_q^i \left( 1 + \cO\left( \frac{i}{[a]_q^{i/2}} \right)\right)
        \stackrel{\text{\eqref{eqn:est_012}}}= [a]_q^i \left( 1 + \cO( i \delta )\right)
        \stackrel{\text{\eqref{eqn:est_01}}}= \left( \frac{1-q^x}{\delta} \right)^i e^{\cO(i\delta )}.
    \end{align*}
    Then, by \eqref{eqn:est_012} with $(p_0, p) = (2, i)$ and $i\delta \lesssim n^{-1/2}$, we have $if_{i,a}(q) \gtrsim [a]_q^i \gtrsim \delta^{-2}$, and hence $\frac{1}{f_{i,a}(q)} \lesssim i\delta^2$. So,
    \begin{align*}
        \prod_{i=2}^{n} \mK_{a,i}
        &\stackrel{\text{\eqref{eqn:est_kai_elem}}}= \prod_{i=2}^{n} \left[ (if_{i,a}(q))^{\lambda_i} e^{\cO\left( \lambda_i^2 / f_{i,a}(q) \right)} \right] \\
        &\quad = \prod_{i=2}^{n} \left[ \left( \frac{1-q^x}{\delta} \right)^{i\lambda_i}
        e^{\cO(i\lambda_i \delta ) + \cO(i\lambda_i^2 \delta^2 )}
        \right] \\
        &\quad = \left( \frac{1-q^x}{\delta} \right)^{n-\lambda_1} e^{\cO(n\delta) + \cO(n^2\delta^2)},
    \end{align*}
    where the last line follows from both $\sum_{i=2}^{n} i \lambda_i = n - \lambda_1 \leq n$ and $\sum_{i=2}^{n} i \lambda_i^2 \leq n^2$. Since $n\delta \lesssim n^{-1/2}$ and $n^2\delta^2 \lesssim n^{-1}$, the contribution of the factor $e^{\cO(n\delta) + \cO(n^2\delta^2)}$ is exactly $1 + \cO\big(n^{-1/2}\big)$ as required.
    
    As for $\mK_{a,i}$, we plug \eqref{eqn:est_01} directly into the definition of $\mK_{a,1}$. Then,
    \begin{align*}
        \mK_{a,1}
        &= \lambda_1! \E_{\lambda} \left[ \prod_{k\geq 1} \left( \frac{1-q^{kx}}{k\delta} \right)^{m_k(\sigma_1)}  e^{\cO\left(k m_k(\sigma_1) \delta \right)} \right] \\
        &= \lambda_1! \left( \frac{1-q^x}{\delta} \right)^{\lambda_1} e^{\cO(\lambda_1\delta)} \E_{\lambda} \left[ \prod_{k\geq 1} (k \mF_k(q^x))^{m_k(\sigma_1)} \right],
    \end{align*}
    where in the last line, we utilized $\sum_k k m_k(\sigma_1) = \lambda_1$ and $\frac{1-q^{kx}}{k\delta} = \big( \frac{1-q^x}{\delta} \big)^k k \mF_k(q^x) $. Then, as before, $\lambda_1 \delta \leq n\delta \lesssim n^{-1/2}$ shows that $e^{\cO(\lambda_1\delta)}$ produces the desired error estimate $1 + \cO\big(n^{-1/2}\big)$. Therefore, multiplying this with the estimate for $\prod_{k\geq 2} \mK_{a,i}$ yields \eqref{eqn:est_kai_detailed} as required.
\end{proof}

This lemma already hints that the contribution of fixed points is the only factor that affects the asymptotic distribution of the normalized pair $W_{\lambda}$. This point is more clearly seen from the following result, which is also last piece of the main claim of this section.

\begin{proposition}
    \label{prop:est_l_large}
    We have
    \begin{align}
        \label{eqn:est_l_large}
        \mL_{\mathrm{large}}
        &= \left( 1 + \cO\big(n^{-1/2}\big) \right) 
        \frac{ \int_{0}^{e^{-\epsilon}} u^{\frac{sn}{r}-1}(1-u)^{n} \mG(u) \, \mathrm{d}u }{ \int_{0}^{1} u^{\frac{sn}{r}-1}(1-u)^{n} \, \mathrm{d}u }.
    \end{align}
\end{proposition}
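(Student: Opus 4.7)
The strategy is to exploit the continuous parameter $x$ in Lemma \ref{lemma:est_kai_detailed} to convert the discrete sum over $a \geq \lceil \epsilon/\delta \rceil$ into a Riemann-type integral, then change variables $u = q^x$ to land on the ratio in \eqref{eqn:est_l_large}. Set $a_0 = \lceil \epsilon/\delta \rceil$. For each integer $a \geq a_0$ and each real $x \in [a, a+1]$ (so $x \geq \epsilon/\delta$ and $|x - a| \leq 1$), Lemma \ref{lemma:est_kai_detailed} yields
\begin{align*}
    \prod_{i\geq 1} \mK_{a,i} = \frac{1+\cO(n^{-1/2})}{\delta^n}(1-q^x)^n \lambda_1!\, \E_{\lambda}\!\left[\prod_k (k\mF_k(q^x))^{m_k(\sigma_1)}\right]
\end{align*}
with error uniform in $x$. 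Because the left-hand side is independent of $x$ and the window has length one, integrating the right-hand side over $x \in [a, a+1]$ is free; moreover, $t^{a-x} = e^{-s(a-x)/n^{1/2}} = 1 + \cO(n^{-1/2})$ uniformly on this window, so the prefactor $t^a$ can be replaced by $t^x$ inside the integral at the same relative cost.

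By Lemma \ref{lemma:genfn_num_cycles} applied to the expectation appearing in the definition of $\mG$, one has $t^{\alpha_1^2 n/2} q^{\alpha_1^2 n^2/4} \lambda_1!\, \E_{\lambda}[\prod_k (k\mF_k(u))^{m_k(\sigma_1)}] = \mG(u)$. Summing over $a \geq a_0$ (so the intervals $[a, a+1]$ tile $[a_0, \infty)$), multiplying through by $t^{\alpha_1^2 n/2} q^{\alpha_1^2 n^2/4}$, and substituting $u = q^x = e^{-x\delta}$ (so $dx = -du/(u\delta)$ and $t^x = u^{sn/r}$) yields
\begin{align*}
    \sum_{a \geq a_0} t^a\, t^{\alpha_1^2 n/2} q^{\alpha_1^2 n^2/4} \prod_i \mK_{a,i} = \frac{1+\cO(n^{-1/2})}{\delta^{n+1}} \int_0^{e^{-a_0\delta}} u^{sn/r-1}(1-u)^n \mG(u)\, du.
\end{align*}
Dividing by the denominator $\delta^{-(n+1)}\int_0^1 u^{sn/r-1}(1-u)^n\, du$ produces the claimed ratio, save that the upper limit is $e^{-a_0\delta}$ in place of $e^{-\epsilon}$.

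It remains to align these cutoffs. Since $|a_0\delta - \epsilon| \leq \delta = \cO(n^{-3/2})$, they differ by $\cO(\delta)$; the hypothesis $2e(s+r)\epsilon/r < 1$ is imposed precisely so that the mode $u^* = s/(s+r)$ of the beta weight $u^{sn/r-1}(1-u)^n$ lies strictly inside $(0, e^{-\epsilon})$, placing the discrepancy strip near $u = e^{-\epsilon}$ well into the tail of the beta density. By Laplace's method the denominator concentrates in an $\cO(n^{-1/2})$-neighborhood of $u^*$, while the beta weight decays exponentially on the discrepancy window. A crude estimate on $\mG$ using $\mF_k(u) \leq \delta^{k-1}/(k^2(1-u)^k)$, applied to the coefficient-extraction identity $\mG(u) = t^{\alpha_1^2 n/2}q^{\alpha_1^2 n^2/4}\lambda_1!\,[z^{\lambda_1}]\exp(\sum_k \mF_k(u) z^k)$, then shows that $\mG$ grows at most sub-exponentially on any set of the form $[0, 1-c]$, and so is dominated by this decay. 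The discrepancy therefore contributes only $\cO(n^{-1/2})$, completing the proof.

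\textbf{Main obstacle.} The technical heart of the argument lies in the last step: producing a crisp enough upper bound on $\mG(u)$ uniformly over $u \in [0, e^{-\epsilon}]$ for the Laplace-method comparison against the beta-function denominator to validate trimming the upper limit from $e^{-a_0\delta}$ to $e^{-\epsilon}$ within the claimed $\cO(n^{-1/2})$ error. Once Lemma \ref{lemma:est_kai_detailed} is in hand, everything preceding this step is essentially careful bookkeeping, but the tail control requires weighing the cycle-index growth of $\mG$ against the sharp concentration of the beta weight, and this is where the smallness of $\epsilon$ earns its keep.
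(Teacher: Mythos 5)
Your proposal is correct and follows essentially the same route as the paper: apply Lemma \ref{lemma:est_kai_detailed}, use the freedom in $x$ to trade the sum over $a$ for an integral, substitute $u=q^x$, and identify the fixed-point factor with $\mG$ via Lemma \ref{lemma:genfn_num_cycles}. The only divergence is your choice of tiling $[a,a+1]$, which forces the extra cutoff-alignment step from $e^{-a_0\delta}$ to $e^{-\epsilon}$ (your tail estimate for it is sound, granting the sketched sub-exponential bound on $\mG$); this detour is avoidable by tiling $[\epsilon/\delta,\infty)$ with unit intervals anchored at $\epsilon/\delta$, which still satisfy $|x-a|\leq 1$ and give the limit $e^{-\epsilon}$ exactly, as the paper implicitly does.
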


\begin{proof}[Proof of Proposition \ref{prop:est_l_large}]
    We apply Lemma \ref{lemma:genfn_num_cycles} to expand
    \begin{align*}
        \lambda_1! \E_{\lambda} \left[ \prod_{k\geq 1} (k \mF_k(q^x))^{m_k(\sigma_1)} \right]
        = \lambda_1! \sum_{\mu \vdash \lambda_1} \prod_{k\geq 1} \frac{\mF_k(q^x)^{\mu_k}}{\mu_k!}
        = t^{-\frac{\alpha_1^2}{2}n}q^{-\frac{\alpha_1^2}{4}n^2} \mG(q^x).
    \end{align*}
    Then, by Lemma \ref{lemma:est_kai_detailed}, we may write
    \begin{align*}
        t^{\frac{\alpha_1^2}{2}n}q^{\frac{\alpha_1^2}{4}n^2} \sum_{a \geq \epsilon/\delta } t^a \left( \prod_{k=1}^{n} \mK_{a,i} \right)
        &= \frac{1 + \cO\big(n^{-1/2}\big)}{\delta^n} \int_{\epsilon/\delta}^{\infty} t^x \left( 1-q^x  \right)^{n} \mG(q^x) \, \mathrm{d}x \\
        &= \frac{1 + \cO\big(n^{-1/2}\big)}{\delta^{n+1}} \int_{0}^{e^{-\epsilon}} u^{\frac{sn}{r}-1} \left( 1-u \right)^{n} \mG(u) \, \mathrm{d}u,
    \end{align*}
    where the last line is obtained by applying the substitution $u = q^x = e^{-x\delta}$. Plugging this to the definition of $\mL_{\mathrm{large}}$ proves \eqref{eqn:est_l_large}.
\end{proof}

This result tells us that $\mG$ accounts for the perturbation caused by the presence of fixed points. Before proceeding to the general case, we rejoice this result by looking into the case where $\cC_{\lambda}$ is free of fixed points, i.e. $\lambda_1 = 0$. In such case, we identically have $\mG(u) = 1$, and hence quickly establish the following partial result.

\begin{corollary}
    \label{thm:unifest_partial}
    For each $s > 0$ and $r > 0$, there exists a constant $C$, depending only on $s$ and $r$, such that
    \begin{align*}
        \left\lvert M_{W_{\lambda}}(-s, -r) - e^{\frac{1}{2}\Sigma_0(s, r)} \right\rvert \leq C n^{-1/2}
    \end{align*}
    holds for any $n \geq 1$ and for any fixed-point-free conjugacy class $\cC_{\lambda}$ of $S_n$. Consequently, along any sequence ofe~$\cC_{\lambda}$'s such that $n \to \infty$ and $\alpha_1 = 0$, $W_{\lambda}$ converges in distribution to a bivariabe normal distribution of zero mean and the covariance matrix $\Sigma_{0}$.
\end{corollary}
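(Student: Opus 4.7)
The plan is to specialize the decomposition in Proposition~\ref{prop:mgf_expand_2} to the fixed-point-free case, where a large amount of machinery trivializes, and then invoke the modified Curtiss theorem (Proposition~\ref{thm:continuity}) to upgrade m.g.f.\ convergence to convergence in distribution. When $\lambda_1 = 0$, the definition of $\mG_{\lambda,s,r}$ collapses: the sum ranges over the unique partition $\mu \vdash 0$, whose associated product is empty, and the prefactor $t^{\alpha_1^2 n/2} q^{\alpha_1^2 n^2/4}$ is $1$, so $\mG(u) \equiv 1$. Substituting $\mG \equiv 1$ into Proposition~\ref{prop:est_l_large} leaves
\begin{align*}
    \mL_{\mathrm{large}} = \left(1 + \cO(n^{-1/2})\right) \frac{\int_0^{e^{-\epsilon}} u^{sn/r - 1}(1-u)^n \, \mathrm{d}u}{\int_0^1 u^{sn/r - 1}(1-u)^n \, \mathrm{d}u}.
\end{align*}

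The core quantitative step is to show that this ratio of beta-type integrals differs from $1$ by at most an exponentially small amount. A routine Laplace-type analysis identifies the unique maximum of $u^{sn/r - 1}(1-u)^n$ on $(0,1)$ at $u^{*} = (sn/r - 1)/(sn/r + n - 1)$, and a direct comparison shows $u^{*} < s/(s+r)$ for every $n \geq 1$. The constraint $2e(s+r)\epsilon / r < 1$ fixed in Section~4 ensures
\begin{align*}
    \epsilon < \frac{r}{2e(s+r)} < \frac{r}{r+s} \leq \log\!\left(1 + \tfrac{r}{s}\right),
\end{align*}
using $\log(1+x) \geq x/(1+x)$, and hence $e^{-\epsilon} > s/(s+r) > u^{*}$. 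The peak therefore sits strictly inside $[0, e^{-\epsilon}]$, and a standard concavity bound on $h(u) = (sn/r-1)\log u + n \log(1-u)$ gives $h(u^{*}) - h(e^{-\epsilon}) \gtrsim n$, forcing the tail integral over $[e^{-\epsilon}, 1]$ to be $\cO(e^{-cn})$ times smaller than the full integral for some $c = c(s,r) > 0$. Thus $\mL_{\mathrm{large}} = 1 + \cO(n^{-1/2})$, and combined with the exponential decay of $\mL_{\mathrm{small}}$ from Proposition~\ref{prop:est_l_small} we obtain $\mL_{\mathrm{large}} + \mL_{\mathrm{small}} = 1 + \cO(n^{-1/2})$.

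Feeding this into Proposition~\ref{prop:mgf_expand_2} yields
\begin{align*}
    M_{W_\lambda}(-s, -r) = \left(1 + \cO(n^{-1/2})\right) e^{\frac{1}{2}\Sigma_0(s,r)},
\end{align*}
which is exactly the desired uniform bound; any residual finitely many small values of $n$ can be absorbed into $C(s,r)$ since $M_{W_\lambda}(-s,-r)$ is trivially bounded for each fixed $n$. For the distributional statement, the estimate provides pointwise convergence $M_{W_\lambda}(-s, -r) \to e^{\frac{1}{2}\Sigma_0(s,r)}$ on the nonempty open set $U = (-\infty, 0) \times (-\infty, 0) \subset \bR^2$. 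Since the right-hand side is the m.g.f.\ of a $N(0, \Sigma_0)$ random vector evaluated at $(-s, -r)$, Proposition~\ref{thm:continuity} applies and delivers the claimed convergence in distribution of $W_\lambda$ to $N(0, \Sigma_0)$.

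The main obstacle is the Laplace-type estimate of the integral ratio—specifically, confirming that the $\epsilon$ already chosen for controlling $\mL_{\mathrm{small}}$ is small enough to place the peak $u^{*}$ strictly in the interior of the retained interval $[0, e^{-\epsilon}]$. Once that compatibility is verified, the remainder is bookkeeping on top of the machinery from Section~4 combined with a single application of the modified Curtiss theorem from Section~2.
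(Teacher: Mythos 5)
Your proposal is correct and follows essentially the same route as the paper: specialize Propositions \ref{prop:mgf_expand_2}--\ref{prop:est_l_large} to $\lambda_1=0$ so that $\mG\equiv 1$, observe that the tail of the beta-type integral over $[e^{-\epsilon},1]$ is exponentially negligible, and conclude with Proposition \ref{thm:continuity}. The only difference is that you spell out the Laplace-type verification (that $e^{-\epsilon}>s/(s+r)>u^{*}$ under the standing choice of $\epsilon$) which the paper dismisses as ``easy to check,'' and that detail is carried out correctly.
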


\begin{proof}[Proof of Corollary \ref{thm:unifest_partial}]
    Combining all the estimates and setting $\lambda_1 = 0$, we have
    \begin{align*}
        M(-s, -r)
        &= \cO\big(n^{-1/2}\big) + e^{\frac{1}{2}\Sigma_{0}(s, r)}  \left(1 - \frac{ \int_{e^{-\epsilon}}^{1} u^{\frac{sn}{r}-1}(1-u)^{n} \, \mathrm{d}u }{ \int_{0}^{1} u^{\frac{sn}{r}-1}(1-u)^{n} \, \mathrm{d}u } \right).
    \end{align*}
    It is easy to check that the ratio of two integrals above vanishes exponentially fast. Therefore, the first claim follows, and the second claim is a direct application of Proposition \ref{thm:continuity}.
\end{proof}

\subsection{Investigating the contribution of fixed points}

The aim of this subsection is to analyze the effect of the perturbation term $\mG$ in Proposition \ref{prop:est_l_large} and finalize the proof of the main theorem. To do so, we prove the following estimate on~$\mL_{\mathrm{large}}$.

\begin{proposition}
    \label{prop:est_l_large_2}
    We have
    \begin{align}
        \label{eqn:est_l_large_2}
        \mL_{\mathrm{large}}
        = \left( 1 + \cO\big(n^{-1/6}\big) \right) e^{\frac{1}{2}\left(\Sigma_{\alpha_1}(s, r) - \Sigma_0(s, r) \right)}.
    \end{align}
\end{proposition}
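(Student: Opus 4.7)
From Proposition~\ref{prop:est_l_large} it suffices to evaluate
\[
R_n := \frac{\int_0^{e^{-\epsilon}} u^{sn/r-1}(1-u)^{n}\,\mG(u)\,\mathrm{d}u}{\int_0^1 u^{sn/r-1}(1-u)^{n}\,\mathrm{d}u}
\]
to precision $1+\cO(n^{-1/6})$. After normalization, the denominator's integrand becomes a Beta density $p_n(u)$ that concentrates at $u^* := s/(s+r)$ with Gaussian width of order $n^{-1/2}$; the choice of $\epsilon$ fixed at the start of the section forces $u^* < e^{-\epsilon}$, so truncating at $e^{-\epsilon}$ only discards exponentially small mass. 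The core task is thus a Laplace-type evaluation of $\int p_n(u)\mG(u)\,\mathrm{d}u$.

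To bring $\mG$ into tractable form, use the exponential-generating-function identity (a consequence of Lemma~\ref{lemma:genfn_num_cycles})
\[
A(\lambda_1,u) := \lambda_1!\sum_{\mu\vdash\lambda_1}\prod_{k}\frac{\mF_k(u)^{\mu_k}}{\mu_k!} = \lambda_1!\,[z^{\lambda_1}]\,e^{z+g(z,u)},
\]
with $g(z,u) := \sum_{k\geq 2}z^k \mF_k(u)$, a small perturbation of $z$ because each $\mF_k$ carries a factor $\delta^{k-1}$. Saddle point at $z^* = \lambda_1 - \lambda_1 g_z(\lambda_1,u)+\cO(1)$, combined with Stirling applied to $\lambda_1!$, cancels the Gaussian prefactor of the saddle-point formula up to $1+\cO(n^{-1/2})$ and gives
\[
\log A(\lambda_1,u) = g(\lambda_1,u) - \tfrac{1}{2}\lambda_1\, g_z(\lambda_1,u)^2 + \cO(n^{-1/2}).
\]
Because $\lambda_1\delta = \alpha_1 r/n^{1/2}\to 0$, the relevant series truncate after a few terms at order $\cO(1)$ or larger. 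Absorbing the $u$-independent prefactor $t^{\alpha_1^2 n/2}q^{\alpha_1^2 n^2/4}$ yields
\[
\log\mG(u) = \tfrac{\alpha_1^2}{2}\,n^{1/2}\cdot\tfrac{u(s+r)-s}{1-u} + \psi_0(u) + \cO(n^{-1/2}),
\]
where the leading $n^{1/2}$ term vanishes exactly at $u^*$.

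For the outer integration, substitute $u = u^*+v/n^{1/2}$: then $p_n(u^*+v/n^{1/2})/n^{1/2}\to\sqrt{c/\pi}\,e^{-cv^2}$ with $c = (s+r)^3/(2r^2 s)$, and Taylor expansion of the $n^{1/2}$ piece of $\log\mG$ about $u^*$ contributes a linear remainder $av$ with $a = \alpha_1^2(s+r)^2/(2r)$, up to terms of size $v^2/n^{1/2}$ or smaller. Completing the square in the Gaussian integral yields
\[
R_n = e^{\psi_0(u^*)+a^2/(4c)}(1+\text{error}),
\]
and direct substitution of $u^* = s/(s+r)$ produces $\psi_0(u^*) = -\tfrac{\alpha_1^3}{6}(s^2+sr)-\tfrac{\alpha_1^3 r^2}{72}$ together with $a^2/(4c) = \tfrac{\alpha_1^4}{8}(s^2+sr)$, whose sum matches $\tfrac{1}{2}(\Sigma_{\alpha_1}(s,r)-\Sigma_0(s,r))$ on the nose.

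The hard part is uniform bookkeeping of three competing error sources: the saddle-point correction for $A(\lambda_1,u)$, the Taylor error of $\log\mG$ after the $n^{1/2}$ factor is extracted, and the Gaussian tail beyond a cutoff $|v|\leq V$. A crude pointwise upper bound on $\mG$ away from $u^*$, derivable from $A(\lambda_1,u)\leq e^{h(\lambda_1,u)}$ via Stirling, lets us restrict to the window $V = n^{1/3}$, on which the accumulated exponential error works out to $\cO(V^2/n^{1/2}) = \cO(n^{-1/6})$, matching the claimed rate. Degenerate regimes (such as $\lambda_1=0$, already treated by Corollary~\ref{thm:unifest_partial}, or very small $\lambda_1$ where the saddle-point expansion degenerates) must be handled by direct estimation of the sum defining $\mG$ rather than through the saddle-point formula.
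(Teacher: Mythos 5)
Your overall architecture parallels the paper's: start from Proposition \ref{prop:est_l_large}, replace $\mG$ by an explicit exponential, run a Laplace-type analysis of the Beta-like integral around $u^*=\theta=s/(s+r)$ (which indeed lies below $e^{-\epsilon}$, since $\epsilon<r/(2e(s+r))<\log(1+r/s)$), and finish by completing the square in a Gaussian integral. Your constants all check out: $a=\alpha_1^2(s+r)^2/(2r)$, $c=(s+r)^3/(2r^2s)$ and your $\psi_0(u^*)$ reproduce the paper's exponent $\frac{\alpha_1^2}{2}\sqrt{s(s+r)}\,z-\frac{\alpha_1^3}{72}(r^2+12rs+12s^2)$, and $a^2/(4c)+\psi_0(u^*)=\frac{1}{2}\left(\Sigma_{\alpha_1}(s,r)-\Sigma_0(s,r)\right)$. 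Where you genuinely diverge is the treatment of $\mG$: you extract $\lambda_1!\,[z^{\lambda_1}]e^{z+g(z,u)}$ by saddle point plus Stirling, whereas the paper's Lemma \ref{lemma:est_G} Poissonizes the partition sum, writing $\mG$ in terms of independent Poisson variables $X_k$ of rate $\lambda_1^k\mF_k(u)$ and controlling the correction $\E\big[\lambda_1!/((\lambda_1-Y)!\lambda_1^Y)\big]$ by Chebyshev, with a case split at $\lambda_1=\delta^{-1/4}$. Both routes lead to the same expansion $\lambda_1^2\mF_2(u)+\lambda_1^3\mF_3(u)-2\lambda_1^3\mF_2(u)^2$; yours is more classical-analytic, the paper's buys uniformity in $\lambda_1$ more cheaply.

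Two concrete points need repair. First, the window is wrong as stated: with $V=n^{1/3}$, the quadratic Taylor remainder of the $n^{1/2}$-sized piece of $\log\mG$ is of order $V^2/n^{1/2}=n^{1/6}$, which diverges, so the asserted bound $\cO(V^2/n^{1/2})=\cO(n^{-1/6})$ is an arithmetic slip; in addition the cubic remainder of the Beta log-density contributes $\cO(V^3/n^{1/2})$. You need $V\lesssim n^{1/9}$, or simply $V\asymp\log n$ as the paper uses via Lemma \ref{lemma:dominating}; the discarded tail is still negligible because $\mG(u)\le e^{C(|v|+1)}$ on $[0,e^{-\epsilon}]$ while the Beta factor decays like $e^{-cv^2}$. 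Second, the claim that the saddle point gives $\log A(\lambda_1,u)=g(\lambda_1,u)-\frac{1}{2}\lambda_1 g_z(\lambda_1,u)^2+\cO(n^{-1/2})$ uniformly is optimistic: the relative correction in the coefficient extraction is of order $1/\lambda_1$ (plus higher-derivative terms of $g$), which is not $\cO(n^{-1/2})$ for moderate $\lambda_1$, and uniformity in $u\in[0,e^{-\epsilon}]$ must also be tracked. To reach the stated $n^{-1/6}$ rate you need an explicit split, e.g.\ $\lambda_1\le n^{1/6}$ handled by direct estimation of the sum (where $\mG=1+\cO(n^{-1/6})$) and $\lambda_1>n^{1/6}$ by the saddle point; you gesture at this for ``degenerate regimes'' but leave it unquantified, and this uniform bookkeeping is exactly where the paper's Lemma \ref{lemma:est_G} spends its effort, so it cannot be left implicit.
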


Before proceeding, we check that this is the last ingredient towards establishing Theorem \ref{thm:unifest} and consequently Theorems \ref{thm:main_01} and \ref{thm:main_02}.

\begin{proof}[Proof of Theorems \ref{thm:unifest}, \ref{thm:main_01}, and \ref{thm:main_02}]
    Combining Proposition \ref{prop:mgf_expand_2} and \ref{prop:est_l_large_2}, we have
    \begin{align*}
        M_{W_{\lambda}}(-s, -r) = \left( 1 + \cO\big(n^{-1/6}\big) \right) e^{\frac{1}{2}\Sigma_{\alpha_1}(s, r)} + \cO(\mL_{\mathrm{small}}).
    \end{align*}
    By noting that $\mL_{\mathrm{small}}$ decays at least exponentially fast, it can be absorbed into the error term $\cO(n^{-1/6})$, and hence, Theorem \ref{thm:unifest} follows.
    
    Since Theorem \ref{thm:main_01} is a special case of Theorem \ref{thm:main_02}, it is enough to prove the latter. Fix $s, r > 0$ and let $C = C(s, r) > 0$ be as in Theorem \ref{thm:unifest}. Let $A_n$ and $\alpha_{1,n}$ satisfy the hypotheses of Theorem \ref{thm:main_02}. Then, for each $n \in \bN$, there exists a family~$\Lambda_n$ of integer partitions of $n$ such that $A_n = \bigcup_{\lambda \in \Lambda_n} \cC_{\lambda}$ and that $\alpha_1(\lambda) = \alpha_{1,n}$ for all $\lambda \in \Lambda_n$. Then,
    \begin{align*}
        \left| M_{W_n}(-s, -r) - e^{\frac{1}{2}\Sigma_{\alpha_{1,n}}(s, r)} \right|
        &\leq \sum_{\lambda \in \Lambda_n} \frac{|\cC_{\lambda}|}{|A_n|} \left|  M_{W_{\lambda}}(-s, -r) - e^{\frac{1}{2}\Sigma_{\alpha_{1}(\lambda)}(s, r)} \right| \\
        &\leq C n^{-1/6}.
    \end{align*}
    Since $\alpha_{1,n} \to \alpha$ by the hypothesis, $M_{W_n}(-s, -r) \to e^{\frac{1}{2}\Sigma_{\alpha}(s, r)}$, and therefore, the conclusion follows from the modified Curtiss' theorem.
\end{proof}

We return to the problem of showing \eqref{eqn:est_l_large_2}. The first step is to produce a manageable formula for $\mG$ modulo relative error of magnitude $n^{-1/6}$. Although we suspect that $n^{-1/6}$ is not optimal, we do not attempt to make any improvements.

\begin{lemma}
    \label{lemma:est_G}
    There exists $N \geq 1$, depending only on $s$ and $r$, such that
    \begin{align}
        \label{eqn:est_G}
        \mG(u)
        = \left( 1 + \cO\big(n^{-1/6}) \right) \exp\left\{ \lambda_1^2 \left( \mF_2(u) -  \mF_2\left( \frac{s}{s+r} \right) \right) + \lambda_1^3 \left( \mF_3(u) - 2 \mF_2(u)^2 \right) \right\}
    \end{align}
    for all $n \geq N$, $\lambda \vdash n$, and $u \in [0, e^{-\epsilon}]$.
\end{lemma}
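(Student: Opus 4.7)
The plan is to reformulate $\mG(u)$ as a coefficient extraction and then evaluate that coefficient via a Stirling-type expansion of the falling factorial combined with a Poisson-sum computation.

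First, the generating-function identity
\[
\exp\!\Bigl(\sum_{k\geq 1}\mF_k(u)\,y^k\Bigr) = \sum_{m\geq 0}\,y^m\sum_{\mu\vdash m}\prod_{k\geq 1}\frac{\mF_k(u)^{\mu_k}}{\mu_k!}
\]
recasts the inner sum in the definition of $\mG$ as $\lambda_1!\,[y^{\lambda_1}]\exp\Phi(y)$, where $\Phi(y):=\sum_{k\geq 1}\mF_k(u)\,y^k$. Using $\mF_2(v)=\frac{\delta}{4}\frac{1+v}{1-v}$ together with $\lambda_1=\alpha_1 n$, $t=e^{-s/n^{1/2}}$, and $q=e^{-r/n^{3/2}}$, one verifies directly
\[
t^{\alpha_1^2 n/2}\,q^{\alpha_1^2 n^2/4} = \exp\!\Bigl(-\tfrac{\alpha_1^2(2s+r)}{4}n^{1/2}\Bigr) = \exp\!\bigl(-\lambda_1^2 \mF_2(s/(s+r))\bigr),
\]
so the lemma reduces to the asymptotic
\[
\lambda_1!\,[y^{\lambda_1}]\exp\Phi(y) = \bigl(1+\cO(n^{-1/6})\bigr)\exp\!\bigl\{\lambda_1^2 \mF_2(u) + \lambda_1^3(\mF_3(u) - 2\mF_2(u)^2)\bigr\}.
\]

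Second, since $\mF_1(u)=1$ and, for $u\in[0,e^{-\epsilon}]$ and $k\geq 2$, $\mF_k(u)\lesssim_\epsilon \delta^{k-1}/k^2$, the size estimate $\lambda_1^k\mF_k(u)=\cO(n^{(3-k)/2})$ holds uniformly in $u$. In particular $\lambda_1^2\mF_2(u)=\cO(n^{1/2})$, $\lambda_1^3\mF_3(u)=\cO(1)$, and $\sum_{k\geq 4}\lambda_1^k\mF_k(u)=\cO(n^{-1/2})$. The last estimate lets us replace $\Phi$ by its truncation $\Phi_{\leq 3}(y):=y+\mF_2(u)y^2+\mF_3(u)y^3$ at multiplicative cost $1+\cO(n^{-1/2})$; this is straightforward since both $e^{\Phi(y)}$ and $e^{\Phi_{\leq 3}(y)}$ have nonnegative power-series coefficients, allowing the comparison to be performed term-by-term against the partition representation.

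Third, expanding $e^{\Phi_{\leq 3}(y)}=e^y\sum_{j_2,j_3\geq 0}\mF_2(u)^{j_2}\mF_3(u)^{j_3}y^{2j_2+3j_3}/(j_2!j_3!)$ and convolving with $e^y$ gives
\[
\lambda_1!\,[y^{\lambda_1}]e^{\Phi_{\leq 3}(y)} = \sum_{2j_2+3j_3\leq\lambda_1}(\lambda_1)_{2j_2+3j_3}\,\frac{\mF_2(u)^{j_2}\mF_3(u)^{j_3}}{j_2!\,j_3!},
\]
where $(\lambda_1)_m$ is the falling factorial. Writing $u_k:=\lambda_1^k\mF_k(u)$, the dominant contribution comes from $j_2$ of Poisson-like order $u_2\sim n^{1/2}$ and $j_3=\cO(1)$, so $m:=2j_2+3j_3=\cO(n^{1/2})$. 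The expansion $(\lambda_1)_m=\lambda_1^m\exp\!\bigl(-m^2/(2\lambda_1)+\cO(m/\lambda_1+m^3/\lambda_1^2)\bigr)$, together with $m^2/\lambda_1=\cO(1)$ and $m^3/\lambda_1^2=\cO(n^{-1/2})$, reduces the sum — modulo $1+\cO(n^{-1/2})$ — to $e^{u_3}\sum_{j_2\geq 0}u_2^{j_2}e^{-2j_2^2/\lambda_1}/j_2!$, after noting that cross terms $j_2j_3/\lambda_1=\cO(n^{-1/2})$ are negligible. Viewing the last sum as $e^{u_2}\,\E\!\bigl[e^{-2J^2/\lambda_1}\bigr]$ with $J\sim\mathrm{Poisson}(u_2)$ and using a Gaussian/cumulant computation for a Poisson of large mean $u_2\sim n^{1/2}$, we obtain $\E\!\bigl[e^{-2J^2/\lambda_1}\bigr]=\exp(-2u_2^2/\lambda_1)\bigl(1+\cO(n^{-1/2})\bigr)$. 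The identity $2u_2^2/\lambda_1=2\lambda_1^3\mF_2(u)^2$ produces exactly the $-2\mF_2(u)^2\lambda_1^3$ term in the exponent, finishing the calculation.

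The main obstacle is making all of these approximations uniform in $u\in[0,e^{-\epsilon}]$ and in the partition $\lambda\vdash n$, while simultaneously controlling atypical configurations — both the tails of $j_2$ in the Poisson sum and any nonzero $j_k$ for $k\geq 4$ — without losing the stated rate. The uniform lower bound $1-u\geq 1-e^{-\epsilon}$ is crucial here, as it guarantees summable upper bounds on $\mF_k(u)$ so that all implicit constants depend only on $s$, $r$, and $\epsilon$. The slack between the natural $\cO(n^{-1/2})$ coming from the cumulant expansion and the stated $\cO(n^{-1/6})$ merely reflects conservative tail bookkeeping, consistent with the authors' remark that the exponent $-1/6$ is not optimal.
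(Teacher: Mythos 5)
Your proposal follows essentially the same route as the paper's proof. The paper also rewrites the inner sum as a Poisson-type expectation: it introduces independent $X_k \sim \mathrm{Poisson}(\lambda_1^k \mF_k(u))$ and $Y = \sum_{k\geq 2} kX_k$, rewrites
\[
\mG(u) = t^{\alpha_1^2 n/2} q^{\alpha_1^2 n^2/4} \, e^{\sum_{k\geq 2}\E[X_k]} \, \E\!\left[\frac{\lambda_1!}{(\lambda_1-Y)!\,\lambda_1^Y}\right],
\]
isolates the same identity $t^{\alpha_1^2 n/2} q^{\alpha_1^2 n^2/4} = \exp\{-\lambda_1^2 \mF_2(s/(s+r))\}$, uses the same uniform bound $\lambda_1^k\mF_k(u)\lesssim n^{(3-j)/2}$ (with $1-u\geq 1-e^{-\epsilon}$), drops $k\geq 4$ at cost $1+\cO(n^{-1/2})$, and applies the falling-factorial expansion $\prod_{j=0}^{Y-1}(1-j/\lambda_1)=\exp\{-Y^2/(2\lambda_1)+\cO(\lambda_1^{-1/2})\}$. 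Your coefficient-extraction framing and the explicit $\E[e^{-2J^2/\lambda_1}]$ with $J\sim\mathrm{Poisson}(u_2)$ are just a reparametrization of this.

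Where you leave real work undone — and where the paper actually spends most of its effort — is exactly the "atypical configurations" paragraph you wave at the end. Two concrete things are missing. First, the falling-factorial expansion $(\lambda_1)_m=\lambda_1^m\exp(-m^2/(2\lambda_1)+\cO(m/\lambda_1+m^3/\lambda_1^2))$ is not uniformly useful when $\lambda_1$ is small; the paper handles this by an explicit two-case split on whether $\lambda_1<\delta^{-1/4}$ or not, with the small-$\lambda_1$ case dispatched by noting both sides of \eqref{eqn:est_ex} equal $1+\cO(n^{-1/6})$ directly. Second, your sentence "using a Gaussian/cumulant computation for a Poisson of large mean $u_2\sim n^{1/2}$" tacitly assumes $u_2$ is large, but across all $\lambda\vdash n$ and all $u\in[0,e^{-\epsilon}]$ the quantity $u_2=\lambda_1^2\mF_2(u)$ ranges from $o(1)$ up to order $n^{1/2}$, and the Gaussian approximation needs justification outside the central window of $J$; the paper instead replaces this entirely by a Chebyshev concentration event $\cA = \{|Y-\E[Y]|\leq\lambda_1^{5/6}\delta^{1/3}\}$, bounds $\Prob[\cA^c]\lesssim(\lambda_1\delta)^{1/3}\lesssim n^{-1/6}$, and uses the trivial bound $\frac{\lambda_1!}{(\lambda_1-Y)!\lambda_1^Y}\leq 1$ on $\cA^c$. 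This Chebyshev bound is precisely where the $n^{-1/6}$ in the statement comes from, so your claimed $\cO(n^{-1/2})$ for the Poisson expectation, even if correct for the central part, does not by itself settle the lemma's uniform rate without a separate, quantitative tail argument such as the paper's. Your outline is the right approach; the two items above are the nontrivial content you would need to supply to make it a proof.
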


\begin{proof}[Proof of Lemma \ref{lemma:est_G}]
    Let $\{X_k\}_{k\geq 2}$ be a family of independent random variables for which $X_k$ has the Poisson distribution of the rate $\lambda_1^k \mF_k(u)$. Also, let $Y_k = k X_k$ and $Y = \sum_{k\geq 2} Y_k$. Then, we can rephrase $\mG(u)$ as
    \begin{align*}
        \mG(u)
        &= t^{\frac{\alpha_1^2}{2}n}q^{\frac{\alpha_1^2}{4}n^2} \lambda_1! \sum_{\mu_2, \mu_3, \cdots \geq 0} \frac{1}{(\lambda_1 - \sum_{k=2} k\mu_k)!} \prod_{k\geq 2} \frac{\mF_k(u)^{\mu_k}}{\mu_k!} \\
        &= t^{\frac{\alpha_1^2}{2}n}q^{\frac{\alpha_1^2}{4}n^2} e^{\sum_{k \geq 2} \E [X_k]} \E \left[ \frac{\lambda_1!}{(\lambda_1 - Y)! \lambda_1^Y} \right],
    \end{align*}
    where we adopt the convention that $\frac{1}{k!} = 0$ for negative integers $k$. Also, we will frequently use the trivial bound $\lambda_1 \leq n = (\delta/r)^{-2/3}$.
    
    We begin by providing an estimate on $\E [X_k] = \Var(X_k) = \lambda_1^k \mF_k(u)$. Assume $u \in [0, e^{-\epsilon}]$. Also, choose $N$ so that $\frac{r}{N^{1/2}(1-e^{-\epsilon})} \leq \frac{1}{2}$ and assume $n \geq N$. Then, we claim that
    \begin{align}
        \label{eqn:est_03}
        \sum_{k \geq j} k^p \E [X_k]
        \lesssim_{j,p} \E [X_j]
        \lesssim_{j} \lambda_1^j \delta^{j-1}
        \lesssim_{j} n^{(3-j)/2}
    \end{align}
    holds for each fixed $j \geq 2$ and $p \in \bR$, with implicit bounds depending only on $j$, $p$, $s$ and $r$. Indeed, for each $k \geq 2$,
    \begin{align*}
        \E [X_k]
        = \frac{\lambda_1^k \delta^{k-1}}{k^2} \cdot \frac{1-u^k}{(1-u)^k}
        \leq \frac{n^k \delta^{k-1}}{(1 - e^{-\epsilon})^k}
        \leq \frac{r^{k-1}}{(1 - e^{-\epsilon})^k} n^{(3-k)/2},
    \end{align*}
    and for each $k \geq j \geq 2$,
    \begin{align*}
        \frac{\E [X_k]}{\E [X_j]}
        &= \frac{j^2(1-u^k)}{k^2(1-u^j)} \left( \frac{\lambda_1 \delta}{1 - u} \right)^{k-j}
        \leq \frac{j^2}{1-e^{-j\epsilon}} \left( \frac{r}{n^{1/2}(1 - e^{-\epsilon})} \right)^{k-j}.
    \end{align*}
    Then, \eqref{eqn:est_03} follows easily from both inequalities.
    
    Now, from $ t^{\alpha_1^2 n/2}q^{\alpha_1^2 n^2/4} = \exp \big\{ - \lambda_1^2 \mF_2\big( \frac{s}{s+r} \big) \big\}$ and \eqref{eqn:est_03} with $j = 4$, we get
    \begin{align*}
        &t^{\frac{\alpha_1^2}{2}n}q^{\frac{\alpha_1^2}{4}n^2} e^{\sum_{k \geq 2} \E [X_k]} \\
        &\hspace{1.5em} = e^{\sum_{k \geq 4} \E [X_k]} \exp\left\{ \E [X_2] - \lambda_1^2 \mF_2\left( \frac{s}{s+r} \right) + \E [X_3] \right\} \\
        &\hspace{1.5em} = \big( 1 + \cO\big(n^{-1/2}\big) \big) \exp\left\{ \lambda_1^2 \left( \mF_2(u) -  \mF_2\left( \frac{s}{s+r} \right) \right) + \lambda_1^3 \mF_3(u) \right\}.
    \end{align*}
    As a result, it suffices to show that
    \begin{align}
        \label{eqn:est_ex}
        \E \left[ \frac{\lambda_1!}{(\lambda_1 - Y)! \lambda_1^Y} \right]
        = \big( 1 + \cO\big(n^{-1/6}\big)\big) e^{-2\lambda_1^3 \mF_2(u)^2}.
    \end{align}
    For the rest of the proof, we will establish \eqref{eqn:est_ex}. To this end, we will divide the proof into two cases according to whether $\lambda_1 < \delta^{-1/4}$ or $\lambda_1 \geq \delta^{-1/4}$.
    
    \noindent \\ \textbf{Case 1: $\lambda_1 < \delta^{-1/4}$.} In such case, $\lambda_1^3 \mF_2(u)^2 \lesssim \lambda_1^3 \delta^2 < \delta^{5/4} \lesssim n^{-1/6}$, and so,
    \begin{align*}
        e^{-2\lambda_1^3 \mF_2(u)^2} = 1 + \cO\big(n^{-1/6}\big).
    \end{align*}
    For the left-hand side of \eqref{eqn:est_ex}, $\sum_{k \geq 2} \E [X_k] \lesssim \lambda_1^2 \delta < \delta^{1/2} \lesssim n^{-1/6}$, and so,
    \begin{align*}
        1
        \geq \E \left[ \frac{\lambda_1!}{(\lambda_1 - Y)! \lambda_1^Y} \right]
        \geq \Prob [ Y = 0 ]
        = e^{-\sum_{k \geq 2} \E [X_k]}
        = 1 - \cO\big(n^{-1/6}\big).
    \end{align*}
    Consequently, both sides of \eqref{eqn:est_ex} are $1+\cO(n^{-1/6})$, and hence, the claim is true.
        
    \noindent \\ \textbf{Case 2: $\lambda_1 \geq \delta^{-1/4}$.} In such case, both the assumption and the trivial bound gives us $\lambda_1^{-1/2} \leq \delta^{1/8} \lesssim n^{-1/6}$ and $(\lambda_1 \delta)^{1/3} \lesssim n^{-1/6}$. Also, by \eqref{eqn:est_03},
    \begin{align*}
        \Var(Y) = \sum_{k\geq 2} k^2 \E [X_k] \lesssim \lambda_1^2 \delta
        \quad \text{and} \quad
        \sum_{k \geq 3} \E [Y_k] = \sum_{k \geq 3} k \E [X_k] \lesssim 1.
    \end{align*}
    Let $\cA$ the event that $|Y - \E [Y]| \leq \lambda_1^{5/6}\delta^{1/3}$ holds. Then, by Chebyshev's inequality,
    \begin{align}
        \label{eqn:est_04}
        \Prob[ \cA^c ]
        \leq \frac{\Var(Y)}{\big( \lambda_1^{5/6} \delta^{1/3} \big)^2}
        \lesssim (\lambda_1 \delta)^{1/3}
        \lesssim n^{-1/6},
    \end{align}
    Also, given $\cA$, dividing both sides of $Y = \E [Y] + \cO\big(\lambda_1^{5/6}\delta^{1/3}\big)$ by $\lambda_1^{1/2}$ gives
    \begin{align}
        \label{eqn:est_05}
        \begin{split}
            \frac{Y}{\lambda_1^{1/2}}
            &= \frac{2 \E [X_2]}{\lambda_1^{1/2}} + \lambda_1^{-1/2} \sum_{k \geq 3} \E [Y_k] + \cO\big( (\lambda_1 \delta)^{1/3} \big) \\
            &= \frac{2 \E [X_2]}{\lambda_1^{1/2}} + \cO\big( n^{-1/6} \big).
        \end{split}
    \end{align}
    Plugging $\E [X_2] \lesssim \lambda_1^2 \delta \lesssim \lambda_1^{1/2}$ into \eqref{eqn:est_05} yields $Y \lesssim \lambda_1^{1/2}$, and so,
    \begin{align*}
        \frac{\lambda_1!}{(\lambda_1 - Y)! \lambda_1^Y}
         = \prod_{j=0}^{Y-1} \left( 1 - \frac{j}{\lambda_1} \right)
        &= \exp\left\{ - \sum_{j=0}^{Y-1} \left( \frac{j}{\lambda_1} + \cO\left( \frac{j^2}{\lambda_1^2} \right) \right) \right\}  \\
        &= \exp\left\{ - \frac{Y^2}{2\lambda_1} + \cO\big( \lambda_1^{-1/2} \big) \right\} \\
        &= \exp\left\{ - \frac{2(\E [X_2])^2}{\lambda_1} + \cO \big( n^{-1/6} \big) \right\}.
    \end{align*}
    Combining altogether and noting that $\frac{(\E [X_2])^2}{\lambda_1} = \lambda_1^3 \mF_2(u)^2 \lesssim \lambda_1^3 \delta^2 \lesssim 1$,
    \begin{align*}
        \E \left[ \frac{\lambda_1!}{(\lambda_1 - Y)! \lambda_1^Y} \right]
        &= \E \left[ \frac{\lambda_1!}{(\lambda_1 - Y)! \lambda_1^Y} \, \middle| \, \cA \right]\Prob[\cA] + \cO (1) \Prob[\cA^c] \\
        &= e^{-2\lambda_1^3 \mF_2(u)^2 + \cO(n^{-1/6})} \left(1 + \cO\big(n^{-1/6}\big)\right) + \cO\big(n^{-1/6}\big) \\
        &= \left(1 + \cO\big(n^{-1/6}\big)\right) e^{-2\lambda_1^3 \mF_2(u)^2},
    \end{align*}
    which proves \eqref{eqn:est_G} as desired.
\end{proof}

The above lemma takes cares of the term $\mG$. Then, $\mL_{\mathrm{large}}$ reduces to the ratio of two explicit integrals modulo some uniform relative error, hence all that is left is to give an explicit control over those integrals. These integrals can be analyzed in the fashion of Laplace's method, with the term $\mG$ behaving as an exponential tilting of the gaussian density function. (We will see what exactly this mean in the proof of Proposition \ref{prop:est_l_large_2}.) The following lemma will be useful for this purpose.

\begin{lemma}
    \label{lemma:dominating}
    For each $n \geq 1$, define $f_n : \bR \to \bR$ by
    \begin{align*}
        f_n(z) = \left( 1 + \frac{z}{\sqrt{n}} \right)_+^n e^{-\sqrt{n}z}
    \end{align*}
    where $x_+ := \max\{x, 0\}$ denotes the positive part of $x$. Then, for any $z \in \bR$ and for any $n \geq m \geq 1$, we have that
    \begin{align*}
        f_n(z)
        \leq \begin{cases}
            e^{-\frac{1}{2}z^2}, & z < 0 \\
            f_m(z), & z \geq 0
        \end{cases}.
    \end{align*}
\end{lemma}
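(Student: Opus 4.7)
The plan is to take logarithms and reduce both cases to elementary scalar inequalities about $\log(1+x)$. Write $\log f_n(z) = n \log(1 + z/\sqrt{n}) - \sqrt{n}\,z$ whenever $1 + z/\sqrt n > 0$, and recall $f_n(z)=0$ when $1 + z/\sqrt n \leq 0$.

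For the case $z < 0$, if $z \leq -\sqrt n$ the bound is trivial since $f_n(z) = 0$. Otherwise set $x = z/\sqrt n \in (-1,0)$. I would show, by checking the sign of its derivative, that the function $\varphi(x) = \log(1+x) - x + x^2/2$ satisfies $\varphi(0) = 0$ and $\varphi'(x) = x^2/(1+x) \geq 0$ on $(-1,\infty)$, so $\varphi$ is nondecreasing there; hence $\varphi(x) \leq 0$ for $x \leq 0$, i.e.\ $\log(1+x) \leq x - x^2/2$. Multiplying by $n$ and substituting $x = z/\sqrt n$ gives exactly $n\log(1 + z/\sqrt n) - \sqrt n\,z \leq -z^2/2$, which is the desired bound $f_n(z) \leq e^{-z^2/2}$.

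For the case $z \geq 0$, I would treat the exponent as a function of a continuous parameter. Fix $z \geq 0$ and define $g(t) = t\log(1 + z/\sqrt t) - \sqrt t\,z$ for $t > 0$; the statement reduces to showing $g$ is nonincreasing in $t$. Substituting $u = z/\sqrt t \geq 0$, a direct computation yields
\[
g'(t) = \log(1 + u) - \frac{u(2+u)}{2(1+u)},
\]
so it suffices to prove $\psi(u) := \frac{u(2+u)}{2(1+u)} - \log(1+u) \geq 0$ on $[0,\infty)$. This follows from $\psi(0) = 0$ together with
\[
\psi'(u) = \frac{u^2}{2(1+u)^2} \geq 0.
\]
Hence $g'(t) \leq 0$, so $g$ is nonincreasing, giving $f_n(z) \leq f_m(z)$ whenever $n \geq m$.

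There is no genuine obstacle here: both pieces are elementary once the right auxiliary scalar function is identified. The only mild subtlety is handling the case $z \leq -\sqrt n$ separately (where $f_n$ is defined via the positive part and simply vanishes), and being careful to note that the monotonicity argument in the second case works for real $t \geq 1$, so in particular for integers $n \geq m \geq 1$.
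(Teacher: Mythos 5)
Your proof is correct, and the computations check out: $\varphi'(x) = x^2/(1+x)$ and $\psi'(u) = u^2/\bigl(2(1+u)^2\bigr)$ are both right, and the $z \le -\sqrt n$ edge case is properly dispatched via the positive part. Your route is related to but genuinely more direct than the paper's. The paper also views the exponent as a function of a continuous parameter $n$, but instead of showing the first partial $\partial_n \log f_n(z)$ has a fixed sign by a scalar inequality, it computes the \emph{second} partial $\partial_n^2 \log f_n(z) = z^3/\bigl(4 n^{3/2}(z+\sqrt n)^2\bigr)$, notes its sign is that of $z$, and combines this with the boundary data $\lim_{n\to\infty}\log f_n(z) = -z^2/2$ and $\lim_{n\to\infty}\partial_n \log f_n(z) = 0$ to deduce monotonicity of $n\mapsto f_n(z)$ in both regimes and then the Gaussian bound for $z<0$ as a limit. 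Your version buys two things: the $z<0$ case collapses to the one-line Taylor bound $\log(1+x)\le x - x^2/2$ with no limit argument needed, and the $z\ge 0$ case settles the sign of $g'(t)$ directly via the auxiliary function $\psi$, avoiding the two-step ``convexity plus behavior at infinity'' reasoning. The paper's version is arguably slicker in that a single second-derivative formula handles both signs of $z$ uniformly. Both are elementary and correct.
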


\begin{proof}[Proof of Lemma \ref{lemma:dominating}]
    We may regard $n$ as a variable taking values in $(0, \infty)$ and $f_n(z)$ as function of $n$. Then, it is straightforward to verify that
    \begin{align*}
        \lim_{n\to\infty} \log f_n(z) = -\frac{z^2}{2}
        \qquad \text{and} \qquad
        \lim_{n\to\infty} \frac{\partial \log f_n(z)}{\partial n} = 0
    \end{align*}
    for any $z \in \bR$. Moreover, if $n > 0$ satisfies $z+\sqrt{n} > 0$, then
    \begin{align*}
        \frac{\partial^2 \log f_n(z)}{\partial n^2} = \frac{z^3}{4n^{3/2}(z+\sqrt{n})^2},
    \end{align*}
    So $n \mapsto f_n(z)$ is decreasing when $z \geq 0$ and increasing when $z \leq 0$, which in turn implies the desired bound.
\end{proof}

This lemma allows us to estimate the term $\left(1+\frac{z}{\sqrt{n}}\right)_+^{n}$, and in addition, provides explicit control over the relative error of this approximation. Finally, we are ready to complete the proof.

\begin{proof}[Proof of Proposition \ref{prop:est_l_large_2}]
    Write $\theta = \frac{s}{s+r}$. Define $z = z_n(u)$ by $u = \theta + (1-\theta) \sqrt{\frac{\theta}{n}} z$. Plugging the definition of $\mF_2$ and $\mF_3$ into the exponent of \eqref{eqn:est_G} and performing some algebra, we obtain
    \begin{align*}
        &\lambda_1^2 \left( \mF_2(u) -  \mF_2(\theta) \right) - \lambda_1^3 \left( 2 \mF_2(u)^2 - \mF_3(u) \right) \\
        &\hspace{5em}= \frac{\alpha_1^2 r}{2}\frac{\sqrt{\theta}}{1-u}z_n(u) - \frac{\alpha_1^3 r^2}{72} \frac{1+10u+u^2}{(1-u)^2}.
    \end{align*}
    Since $u \in [0, e^{-\epsilon}]$ is uniformly away from $1$, this shows that there exists $C_1 > 0$, depending only on $s$ and $r$, such that $\mG(u) \leq e^{C_1(\lvert z_n(u) \rvert+1)}$ holds for all $u \in [0, e^{-\epsilon}]$. Also,
    \begin{align*}
        u^{\frac{sn}{r}-1}(1-u)^{n}
        &= \theta^{\frac{sn}{r}-1}(1-\theta)^n \left( 1 + \frac{r}{s}\sqrt{\frac{\theta}{n}} z_n(u) \right)^{\frac{sn}{r}-1} \left( 1 - \sqrt{\frac{\theta}{n}} z_n(u) \right)^n \\
        &= \theta^{\frac{sn}{r}-1}(1-\theta)^n e^{-u/\theta} g_n(z),
    \end{align*}
    where $g_n(z) = f_n\left( \frac{r}{s}\sqrt{\theta}z_n(u) \right)^{\frac{s}{r}-\frac{1}{n}} f_n\left( - \sqrt{\theta}z_n(u) \right)$. To estimate this, note that Lemma \ref{lemma:dominating} shows that $f_n$ can be dominated by exponentially decaying function of arbitrarily fast rate. In particular, there exist $N \geq 1$ and $C_2 > 0$, depending only on $s$ and $r$, such that $g_n(z) \leq C_2 e^{-2C_1\lvert z \rvert}$ for all $n \geq N$ and for all $z \in \bR$.
    
    Now, we want to estimate both the denominator and the numerator of \eqref{eqn:est_l_large}. By the aforementioned substitution, we have
    \begin{align*}
        \frac{ \int_{0}^{e^{-\epsilon}} u^{\frac{sn}{r}-1}(1-u)^{n} \mG(u) \, \mathrm{d}u }{ \int_{0}^{1} u^{\frac{sn}{r}-1}(1-u)^{n} \, \mathrm{d}u }
        = \frac{ \int_{z_n(0)}^{z_n(e^{-\epsilon})} e^{1-u/\theta} g_n(z) \mG(u) \, \mathrm{d}z }{ \int_{z_n(0)}^{z_n(1)} e^{1-u/\theta} g_n(z) \, \mathrm{d}z }.
    \end{align*}
    Thus, it suffices to estimate the right-hand side. We proceed by splitting the integrals according to whether $|z| > \frac{\log n}{C_1}$ or $|z| \leq \frac{\log n}{C_1}$. In the first case,
    \begin{align*}
        \int\limits_{|z| > \frac{\log n}{C_1} } e^{1-u/\theta} g_n(z) \mG(u) \, \mathrm{d}z
        \lesssim \int\limits_{|z| > \frac{\log n}{C_1}} e^{-C_1\lvert z \rvert} \, \mathrm{d}z
        \lesssim n^{-1},
    \end{align*}
    and similarly,
    \begin{align*}
        \int\limits_{|z| > \frac{\log n}{C_1} } e^{1-u/\theta} g_n(z) \, \mathrm{d}z
        \lesssim \int\limits_{|z| > \frac{\log n}{C_1}} e^{-2C_1\lvert z \rvert} \, \mathrm{d}z
        \lesssim n^{-2}.
    \end{align*}
    So, we focus on the region $|z| \leq \frac{\log n}{C_1}$. In such case, $u = \theta + \cO\left(\frac{\log n}{n^{1/2}} \right)$, and so,
    \begin{align*}
        e^{1-u/\theta} g_n(z)
        &= \exp\left\{ - \frac{r \theta z^2}{2s} - \frac{\theta z^2}{2} + \cO\left( \frac{(\log n)^3}{n^{1/2}}\right) \right\} \\
        &= \exp\left\{ - \frac{z^2}{2} + \cO\left( \frac{(\log n)^3}{n^{1/2}}\right) \right\},
    \end{align*}
    and 
    \begin{align*}
        \mG(u)
        &= \left( 1 + \cO\big(n^{-1/6}\big)\right) \exp\left\{ \frac{\alpha_1^2 r}{2}\frac{\sqrt{\theta}}{1-\theta}z - \frac{\alpha_1^3 r^2}{72} \frac{1+10\theta+\theta^2}{(1-\theta)^2} + \cO\left( \frac{(\log n)^2}{n^{1/2}}\right) \right\} \\
        &= \left( 1 + \cO\big(n^{-1/6}\big)\right) \exp\left\{ \frac{\alpha_1^2}{2}\sqrt{s(r+s)}z - \frac{\alpha_1^3}{72} \left(r^2 + 12rs + 12s^2\right) \right\}.
    \end{align*}
    Finally, we have the following gaussian integral
    \begin{align*}
        \int_{-\infty}^{\infty} e^{-\frac{z^2}{2}+\frac{\alpha_1^2}{2}\sqrt{s(r+s)}z - \frac{\alpha_1^3}{72} \left(r^2 + 12rs + 12s^2\right)} \, \mathrm{d}z
        = \sqrt{2\pi} e^{\frac{1}{2}\left(\Sigma_{\alpha_1}(s, r) - \Sigma_0(s, r) \right)}.
    \end{align*}
    Therefore, combining altogether,
    \begin{align*}
        \frac{ \int_{0}^{e^{-\epsilon}} u^{\frac{sn}{r}-1}(1-u)^{n} \mG(u) \, \mathrm{d}u }{ \int_{0}^{1} u^{\frac{sn}{r}-1}(1-u)^{n} \, \mathrm{d}u }
        &= \frac{ \int_{|z| \leq (\log n)/C_1} e^{1-u/\theta} g_n(z) \mG(u) \, \mathrm{d}z + \cO(n^{-1}) }{ \int_{|z| \leq (\log n)/C_1} e^{1-u/\theta} g_n(z) \, \mathrm{d}z + \cO(n^{-2}) } \\
        &= \frac{ \sqrt{2\pi} e^{\frac{1}{2}\left(\Sigma_{\alpha_1}(s, r) - \Sigma_0(s, r) \right)} + \cO\big( n^{-1/6} \big) }{ \sqrt{2\pi} + \cO\Big( \frac{(\log n)^3}{n^{1/2}}\Big) } \\
        &= \left( 1 + \cO\big(n^{-1/6}\big) \right) e^{\frac{1}{2}\left(\Sigma_{\alpha_1}(s, r) - \Sigma_0(s, r) \right)}.
    \end{align*}
    Plugging this into \eqref{eqn:est_l_large} proves the desired claim.
\end{proof}


\section{Discussion of possible extensions}

\begin{enumerate}[label={\arabic*.},leftmargin=*,itemsep=0.5em]
    \item One of the main tools of this proof is the modified Curtiss' theorem, which is a significant result in its own right. The authors believe that this can be applied to other problems where explicit formulas of the generating function are known.
    
    \item As commented in Subsection 4.3, the bound $C n^{-1/6}$ in the result of Theorem \ref{thm:unifest} is rather artificial and may possibly be improved. Considering that $n^{-1/2}$ is the prevalent scale in most relative error terms, it is natural to expect that the correct bound would be $n^{-1/2}$ with possible logarithmic corrections.
    
    \item Theorem \ref{thm:main_02} is formulated for conjugation-invariant subsets $A_n$ of $S_n$ with the additional hypothesis that all elements of $A_n$ have the same number of fixed points. In terms of the distribution of the density $m_1(\pi)/n$ of fixed points of $\pi \in A_n$, this hypothesis is equivalent to assuming that $m_1(\pi)/n$ is concentrated at the single value $\alpha_{1,n}$. In view of the uniform estimate in Theorem \ref{thm:unifest}, however, this is not a serious restriction, and indeed there is room for further generalizations. One immediate generalization of Theorem \ref{thm:main_02} is obtained from replacing the hypothesis on $A_n$ by the following one: 
    \begin{center}
        \begin{minipage}{0.85\textwidth}
            $A_n$ is a conjugation-invariant subset of $S_n$ such that $m_1(\pi) \leq \alpha_{1,n}n$ for all $\pi \in A_n$ and that $|\{ \pi \in A_n : m_1(\pi) = \alpha_{1,n}n \}|/|A_n| \to 1$ as $n\to\infty$.
        \end{minipage}
    \end{center}
    
    \item The authors would like to a combinatorial explanation for the $(1-\alpha^2)$ factor in the asymptotic mean.
\end{enumerate}

\bibliographystyle{amsplain}

\begin{thebibliography}{10}

    \bibitem {Bayer} D. Bayer and P. Diaconis, \textit{Trailing the dovetail to its lair}, The Annals of Applied Probability, \textbf{2} (1992), no.2, 294--313.
    
    \bibitem {Billey} S. C. Billey, M. Konvalinka, and J. P. Swanson, \textit{Distribution of major index for standard tableaux and asymptotic normality}, Preprint (2018).
    
    \bibitem {Chen} W. Y. C. Chen and D. G. L. Wang, \textit{The limiting distribution of $q$-derangement numbers}, European J. Combinatorics, \textbf{31} (2010), 2006--2013.
    
    \bibitem {Curtiss} J. H. Curtiss, \textit{A note on the theory of moment generating functions}, The Annals of Mathematical Statistics, \textbf{13} (1942), no.4, 430--433.
    
    \bibitem {Diaconis1} P. Diaconis, M. McGrath, and J. Pitman, \textit{Riffle shuffles, cycles, and descents}, Combinatorica, \textbf{15} (1995), no.1, 11--29.
    
    \bibitem {DiaconisFulman} P. Diaconis and J. Fulman, \textit{Personal communication}. (2017)
    
    \bibitem {DiaconisGraham} P. Diaconis and R. Graham, \textit{The magic of Charles Sanders Peirce}, Preprint.
    
    \bibitem {Diaconis2} P. Diaconis and J. Pitman, \textit{Unpublished notes on descents}
    
    \bibitem {Fulman1} J. Fulman, \textit{The distribution of descents in fixed conjugacy classes of the symmetric group}, Journal of Combinatorial Theory Series A, \textbf{84} (1998), no.2, 171--180.
    
    \bibitem {Fulman2} J. Fulman, \textit{Stein's method and non-reversible Markov chains}, Stein's Method: Expository Lectures and Applications, IMS Lecture Notes Monogr. Ser., \textbf{46} (2004), 69--77.
    
    \bibitem {Garsia} A. M. Garsia and I. Gessel, \textit{Permutation statistics and partitions}, Adv. Math, \textbf{31} (1979), no.3, 288--305.
    
    \bibitem {Gessel} I. Gessel and C. Reutenauer, \textit{Counting permutations with given cycle structure and descent set}, Journal of Combinatorial Theory Series A, \textbf{64} (1993), no.2, 189--215.
    
    \bibitem {Kim} G. B. Kim, \textit{Distribution of descents in matchings}, Accepted to Annals of Combinatorics, \texttt{arXiv:1710.03896 [math.CO]}
    
    \bibitem {KimLee} G. B. Kim and S. Lee, \textit{Central limit theorem for descents in conjugacy classes of ${S}_n$}, Preprint, \texttt{arXiv:1803.10457 [math.CO]}
    
    \bibitem {Knuth} D. Knuth, \textit{The art of computer programming, Vol. 3. Sorting and searching}, Addison-Wesley, Reading, MA. (1973)
    
    \bibitem {Petersen} T. K. Petersen, \textit{Eulerian numbers}, Birkh{\"a}user-Springer, New York, NY. (2015)
    
    \bibitem {Reutenauer} C. Reutenauer, \textit{Free Lie algebras}, Oxford University Press, London, U.K. (1993)
    
    \bibitem {Vatutin} V. A. Vatutin, \textit{The numbers of ascending segments in a random permutations and in the inverse to it are asymptotically independent}, Discrete Math Appl., \textbf{6} (1996), no.1, 41--52.

\end{thebibliography}

\end{document}